\newtheorem{theorem}{Theorem}[section]
\newtheorem{lemma}[theorem]{Lemma}
\newtheorem{proposition}[theorem]{Proposition}
\theoremstyle{definition}
\newtheorem{definition}[theorem]{Definition}
\newtheorem{remark}[theorem]{Remark}
\newtheorem{ass}[theorem]{Assumption}
\newtheorem{example}[theorem]{Example}
\newcommand{\setdef}[2]{\left\{\ #1\ \left|\ \vphantom{#1} #2\ \right.\right\}}
\newcommand{\cA}{\mathcal{A}}
\newcommand{\cD}{\mathcal{D}}
\newcommand{\cF}{\mathcal{F}}
\newcommand{\cH}{{\mathcal{H}}}
\newcommand{\cL}{\mathcal{L}}
\newcommand{\cR}{\mathcal{R}}
\newcommand{\cT}{\mathcal{T}}
\newcommand{\N}{{\mathbb{N}}}
\newcommand{\R}{{\mathbb{R}}}
\newcommand{\C}{{\mathbb{C}}}
\newcommand{\K}{{\mathbb{K}}}
\newcommand{\I}{{\mathbb{I}}}
\newcommand{\Af}{\mathfrak{A}}
\newcommand{\Bf}{\mathfrak{B}}
\newcommand{\Cf}{\mathfrak{C}}
\newcommand{\st}{\ |\ }
\newcommand{\yref}{y_{\rm ref}}
\newcommand{\syst}{\mathfrak{S}=(\Af,\Bf,\Cf)}
\newcommand{\ddt}{\tfrac{\text{\normalfont d}}{\text{\normalfont d}t}}
\newcommand{\pddt}{\tfrac{\partial}{\partial t}}
\newcommand{\ds}[1]{{\rm \, d} #1 \,}
\newcommand{\scpr}[2]{\left\langle #1,#2\right\rangle}
\newcommand{\fa}{\mathfrak{a}}
\DeclareMathOperator{\re}{Re}
\DeclareMathOperator{\divg}{div}
\DeclareMathOperator{\grad}{grad}
\DeclareMathOperator*{\esssup}{ess\,sup}
\def\tb#1{\textcolor[rgb]{0.00,0.00,0.00}{#1}}
\title[Funnel control for boundary control systems] 
      {Funnel Control for  boundary control systems}
\author[Puche, Reis, Schwenninger]{}
\subjclass{Primary: 93C20, 93C40; Secondary: 47H06.}
 \keywords{Nonlinear Feedback, Funnel Control, Boundary Control Systems, Hyperbolic PDEs, Parabolic PDEs, Dissipative Operators.}
 \email{marc.puche@uni-hamburg.de}
 \email{timo.reis@uni-hamburg.de}
 \email{felix.schwenninger@uni-hamburg.de}
\thanks{$^*$ Corresponding author: Marc Puche}
\begin{document}

\maketitle

\centerline{\scshape Marc Puche$^{*,\dagger}$, Timo Reis$^\dagger$ and Felix L.~Schwenninger$^{\dagger,\ddagger}$}
\bigskip
{\centering
\begin{minipage}{0.5\linewidth}
{\footnotesize
\centerline{$^\dagger$University of Hamburg}
 \centerline{Bundesstra\ss e 55}
   \centerline{20146 Hamburg}
   \centerline{Germany}
} 
\end{minipage}
\begin{minipage}{0.5\linewidth}
\centering
{\footnotesize
	\centerline{$^\ddagger$University of Twente}
	\centerline{P.O.~Box 217}
	\centerline{7500AE Enschede}
	\centerline{The Netherlands}
}

\end{minipage}}

\medskip

\bigskip


\begin{abstract}
We study a nonlinear, non-autonomous feedback controller applied to boundary control systems. Our aim is to track a given reference signal with prescribed performance. Existence and uniqueness of solutions to the resulting closed-loop system is proved by using nonlinear operator theory. We apply our results to both hyperbolic and parabolic equations.
\end{abstract}
\section{Introduction}

In this paper we consider a class of \emph{boundary control systems} (\emph{BCS}) of the form
\begin{align*}
\dot{x}(t)&=\Af x(t),\quad t>0,\quad x(0)=x_0,\\
u(t)&=\Bf x(t),\\
y(t)&=\Cf x(t),
\end{align*}
where  $\Af,\Bf,\Cf$ are linear operators. {For Hilbert spaces $U$ and $X$, the $U$-valued functions $u$ and $y$ are interpreted as the input and the measured output $y$, respectively, whereas} $x$ is called the state of the system. Typically, $\Af$ is a differential operator on the state space $X$ and $\Bf,\Cf$ are evaluation operators of the state at the boundary of the spatial domain, that is, the domain of the functions lying in $X$.

The aim of this paper is to develop an adaptive controller for boundary control systems which, roughly speaking, achieves the following goal:
\begin{quote}
\it For any prescribed reference signal $\yref\in W^{2,\infty}([0,\infty){,U})$, the output $y$ of the system tracks $\yref$ in the sense that
 the transient behavior of the error $e(t):=y(t)-\yref(t)$ is controlled.
\end{quote}
Shortly, we will elaborate on the class of possible reference signals and the meaning of ``controlling the transient behavior'' in more detail.
The goal will be achieved by using a \emph{funnel controller}, which, in the simplest case, has the form
\[u(t)=-\frac{1}{1-\varphi(t)^2\|e(t)\|^2}\, e(t)\]
for some positive function $\varphi$.
Under this feedback, the error is supposed to evolve in the  \emph{performance funnel}
$$\cF_\varphi:=\{(t,e)\in[0,\infty)\times {U}\st\varphi(t)\|e\|<1 \}$$
and would hence satisfy
$$\|e(t)\|\leq\varphi(t)^{-1},\quad\text{ for all }t\geq0.$$
In fact, if $\varphi$ tends asymptotically to a large value $\lambda$, then the error remains bounded by $\lambda^{-1}$, see Fig.~\ref{fig:funnel} {which illustrates the case of scalar-valued input and output}.
\begin{figure}[ht!!]
	\centering
	\includegraphics[width=200pt]{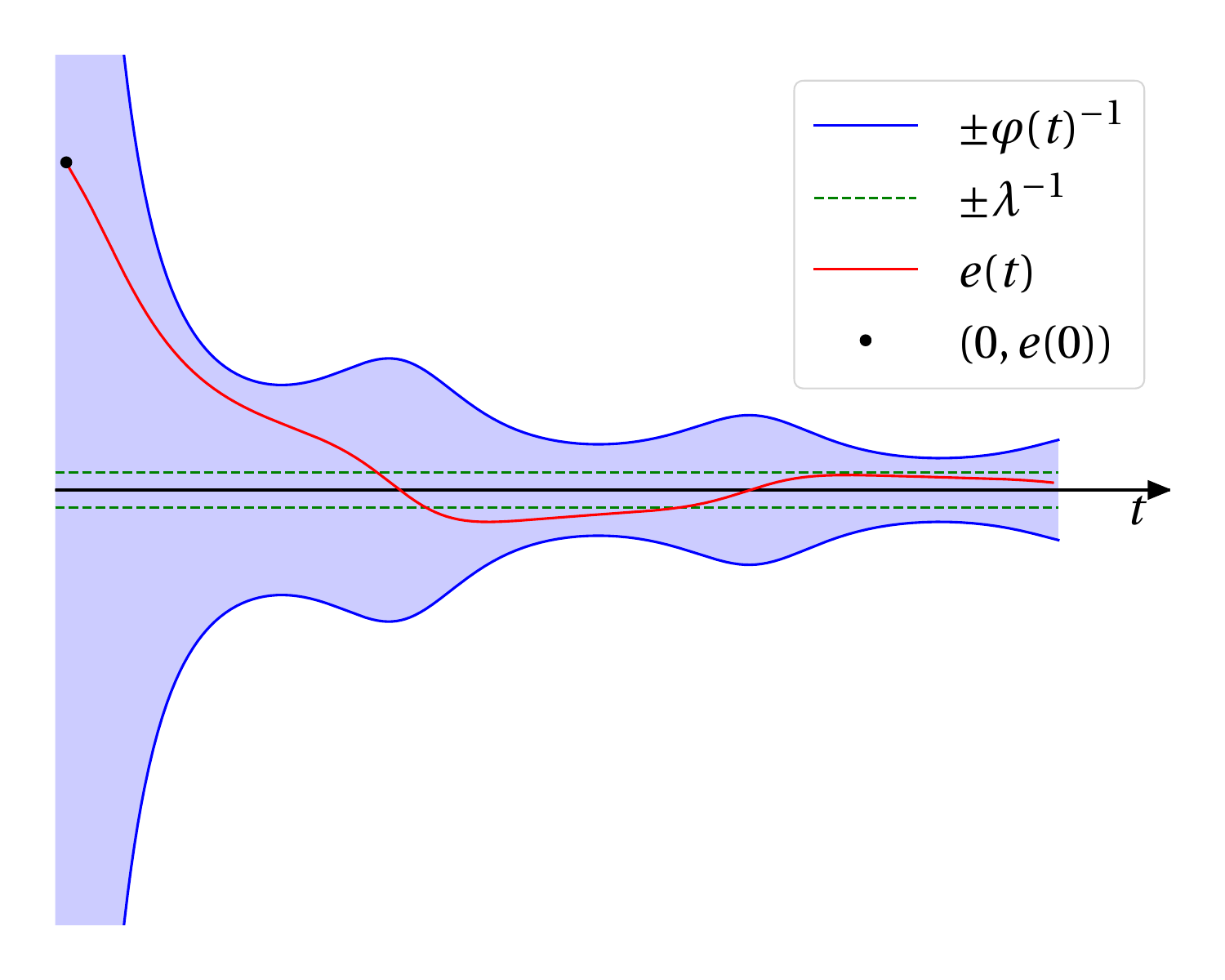}
	\caption{Error evolution in a funnel $\cF_{\varphi}$ with boundary $\varphi(t)^{-1}$.}\label{fig:funnel}
\end{figure}

{Funnel control} was first introduced in \cite{IlchmannRyan}, where feasibility of the funnel controller for a class of functional differential equations has been shown. These encompass infinite dimensional systems with very restrictive assumptions on the operators involved, {a special class of nonlinear delay systems}. In fact, there finite-dimensional linear ``prototype'' systems with relative degree one are treated. The relative degree is a well-known magnitude for finite-dimensional systems and can roughly be understood as the number of times one needs to differentiate the output $y$ {in order to derive an equation relating $y$ and the input $u$.}  
 This quantity turned out to be relevant when considering the funnel controller and has been used to generalize the results of \cite{IlchmannRyan}. For instance, in \cite{Ilch07}, the funnel controller was proved to be applicable for systems with known but arbitrary relative degree. The problem is that the ansatz used there  requires very large powers of the gain factor. This problem has been overcome in \cite{BergHoang} by introducing a~funnel controller which involves derivatives of the output and reference signal, and feasibility of this controller in the case of nonlinear finite-dimensional systems with \emph{strict relative degree} with \emph{stable internal dynamics} has been proven. The funnel control for infinite-dimensional systems has so far only attracted attention in special configurations \cite{BergPuchSchw,IlchmannSelig,ReisSelig}. The recent article \cite{BergPuchSchw} deals with a linearized model of a moving water tank by showing that this system belongs to the class being treated in \cite{BergHoang}, see also \cite{BergPuchSchw20}. In \cite{IlchmannSelig}, a class of infinite-dimensional systems has been considered where feasibility of the funnel controller can be proved in a similar way as for finite-dimensional systems. More precisely, this class consists of systems which possess a so-called {\em Byrnes-Isidori form} via bounded and boundedly invertible state space transformation. The existence of such a~form however requires that the control and observation operators fulfill very strong boundedness conditions, which in particular exclude boundary control and observation. Funnel control of a~heat equation with Neumann boundary control and co-located Dirichlet output has been treated in \cite{ReisSelig}. The proof of feasibility of funnel control uses the spectral properties of the Laplacian, whence this technique is hardly transferable to further classes of boundary control systems.

We consider a~class of {\em boundary control systems} which satisfy a certain energy balance \cite{BastCoro16,JacobZwart}.
The feedback law of the funnel controller naturally induces a nonlinear closed-loop system. For the corresponding solution theory, the concept of (nonlinear) m-dissipative operators in a Hilbert space will play an important role. For an appropriate introduction to this classical topic we refer to~\cite{Kato,Miyadera,Showalter}. 

The paper is organized as follows. In Section \ref{sec:systemclass} we introduce the system class that is the subject of our results. In Section \ref{sec:main} we present the details about the controller and present the main results which refer to the applicability of the funnel controller to the considered system class. In Section~\ref{sec:systex} we present some examples of partial differential equations for which the funnel controller is applicable. Section \ref{sec:proof} contains the proof of the main results together with some preliminary auxiliary results. We provide numerical simulations in Section \ref{sec:sim}.



The expression $\overline{S}$ indicates the closure of a~set $S$. 
The norm in a~normed space {$X$} will be denoted by $\|\cdot\|_{{X}}$ or {simply} $\|\cdot\|$, if clear from context. 
Analogously, the scalar product of an inner product space {$X$} will be denoted by $\scpr{\cdot}{\cdot}_X$ or $\scpr{\cdot}{\cdot}$. \tb{Scalar products are linear in the first argument. Likewise, the duality products $\scpr{\cdot}{\cdot}_{X',X}$ considered in this article are linear in the first argument, and conjugate linear in the second argument, and we set $\langle x,x'\rangle_{X,X'}=\overline{\langle x',x\rangle_{X',X}}$.}\\
{The open unit ball in a~normed space will be denoted by $B_1(0)$.} The space $\K^n$ {(where $\K$ stands for either the field of real or complex numbers)} is {always} provided with the Euclidean inner product. {Normed and inner product spaces can be always real or complex throughout this article. However, we have the following convention: if several spaces are involved within a particular result in this work, then the underlying fields will be assumed to be the same for all these spaces.}

The domain of a (possibly nonlinear) operator $A$ is denoted by $\cD(A)$, $\ker A$ refers to the kernel and $\cR(A)$ stands for the range of $A$. \tb{If $D\subset \cD(A)$, then $A|_{D}$ refers to the restriction of $A$ to $D$}. Given two Banach spaces $X,Y$, the set of linear bounded operators from $X$ to $Y$ will be denoted by $\cL(X,Y)$ and in the case $X=Y$ simply by $\cL(X)$. The identity operator on the space $X$ is $I_X$, or just $I$, if clear from context. We further write $I_m$ instead of $I_{\K^m}$. The symbol $A^*$ stands for the adjoint of a~linear operator $A$. In particular,
$M^\ast\in\K^{n\times m}$ is the transposed of the complex conjugate of $M\in\K^{m\times n}$.


Lebesgue and Sobolev spaces from a measurable set $\Omega\subset\R^d$ will be denoted by $L^{p}(\Omega)$ and $W^{k,p}(\Omega)$.
For a~domain $\Omega\subset\R^d$ with sufficiently smooth boundary $\partial\Omega:=\Gamma$, we denote by $W^{k,p}(\Gamma)$ the Sobolev space at the boundary \cite{Adam75}. The set of infinitely often differentiable functions from $\Omega$ with compact support will be denoted by $C_0^\infty(\Omega)$. 
We identify spaces of $\K^n$-valued functions with the Cartesian product of spaces of scalar-valued functions, such as, for instance $(W^{k,p}(\Omega))^n\cong W^{k,p}(\Omega;\K^n)$.
For an~interval $J\subset\R$ and a Banach space $B$, we set
\[W^{k,\infty}(J;B):=\{f\in L^\infty(J;B)\st f^{(j)}\in L^\infty(J;B),j=0,\dots,k \},\]
which is to be understood in the Bochner sense \cite{Diestel77}. The space $W^{k,\infty}_{\rm loc}(J;B)$ consists of all $f$ whose restriction to any compact interval $K\subset J$ is in $W^{k,\infty}(K;B)$.
\pagebreak[3]
\section{System class}
\label{sec:systemclass}
In the following we introduce our system class, define our controller and discuss the  solution concept to the resulting nonlinear feedback system.
\begin{definition}[System class]\label{def:system_class}
	Let $X$ and $U$ be Hilbert spaces and let $m\in\N$ be given. \tb{Let $\Af:\cD(\Af)\rightarrow X$ be a bounded linear operator, where $\cD(\Af)$ is a~Hilbert space with norm $\|\cdot\|_{\cD(\Af)}$ which is continuously embedded in $X$.
	} Furthermore, let $\Bf,\Cf\in\cL(\cD(\Af),U)$. To the triple $\syst$ we associate the system
	\begin{equation}\label{eq:syst_class}
	\left.\begin{aligned}
	\dot{x}(t)&=\Af x(t),\quad x(0)=x_0,\\\
	u(t)&=\Bf x(t),\\
	y(t)&=\Cf x(t),
	\end{aligned}\right\}
	\end{equation}
{and we call} it a \emph{boundary control system (BCS)}.
\end{definition}
In the sequel we specify the system class.
\begin{ass}\label{ass:system_class}
Let a~ BCS $\syst$ be given.
	\begin{enumerate}[(i)]
		\item\label{ass:1} The system is \emph{(generalized) impedance passive}, i.e., there exists $\alpha\in\R$ such that
		\begin{equation}\label{eq:diss}
		\re\langle \Af x, x\rangle_X\leq\re\langle \Bf x,\Cf x\rangle_{{U}}+\alpha\|x\|_X^2\;\text{ for all $x\in\cD(\Af)$}.
		\end{equation}\label{ass:passivity}
		\item\label{ass:3} {There exists some $\beta\ge\alpha$, such that 
		$\cR(\Af|_{\ker\Cf}-\beta I)=X$.}
		\item\label{ass:CBonto} The operator%
		\begin{equation}\begin{bmatrix}
		\Bf\\
		\Cf
		\end{bmatrix}:\cD(\Af)\to {U\times U}\label{eq:CBop}
\end{equation}
		is surjective.
	\end{enumerate}
\end{ass}
{Next we discuss our assumptions in detail. In particular we show that $\Af|_{\ker\Cf}$ generates a~strongly continuous semigroup and, moreover, the above assumptions imply that, in the case where $U$ is finite-dimensional, Assumption \ref{ass:system_class}\eqref{ass:3} can be replaced with the condition that $\Af|_{\ker\Bf}$ is the generator of a~strongly continuous semigroup.
\begin{proposition}\label{prop:ass}
Let a~ BCS $\syst$ be given with the properties specified in Assumptions~\ref{ass:system_class}\eqref{ass:1}\&\eqref{ass:CBonto}. Then the following two statements are equivalent:
\begin{itemize}
\item[(i)] Assumption~\ref{ass:system_class}\eqref{ass:3} holds.
\item[(ii)] $\Af|_{\ker\Cf}$ generates a strongly continuous semigroup on $X$.
\end{itemize}
Moreover, the following two statements are equivalent.
\begin{itemize}
\item[(iii)] There exists some $\beta\ge\alpha$ such that   $\cR(\Af|_{\ker\Bf}-\beta I)=X$.
\item[(iv)] $\Af|_{\ker\Bf}$ generates a strongly continuous semigroup on $X$.
\end{itemize}
If, additionally, $\dim U<\infty$, then all of the above statements (i)--(iv) are equivalent.
\end{proposition}}
{Whereas the proofs of the equivalence between (i) and (ii) as well as that between (iii) and (iv) are brief (see also the forthcoming Remark~\ref{rem:system_class}), our proof of the equivalence between all of these four statements under the additional assumption that $U$ is finite-dimensional requires a~technical lemma and is therefore postponed to Section~\ref{sec:proof}.}
\pagebreak[3]
\begin{remark}\label{rem:system_class}
\hspace{1em}
	\begin{enumerate}[(a)]
\item\label{rem:system_class1} {The equivalence between (i) and (ii) as well as the one between (iii) and (iv) are direct consequences of the Lumer–Phillips theorem, see e.g.\ \cite[Cor.~II.3.20]{EngeNage00}. Note that the density of the domain follows implicitly from the dissipativity and the the range condition as the considered spaces are reflexive. Also note that the Lumer–Phillips theorem further implies that the semigroups $T_i(\cdot),T_o(\cdot):[0,\infty)\to \cL(X)$ which are respectively generated by $\Af|_{\ker\Bf}$ and $\Af|_{\ker\Cf}$ satisfy $\|T_i(t)\|\leq e^{\alpha t}$ and $\|T_o(t)\|\leq e^{\alpha t}$ for all $t>0$. In particular, the semigroups are contractive, if $\alpha\leq0$ and moreover exponentially stable, if $\alpha<0$. The semigroup $T_i(\cdot)$ describes the free dynamics, i.e., the evolution of the state when $u\equiv0$. Likewise, the semigroup $T_o(\cdot)$ specifies the remaining dynamics when the output is vanishing constantly, i.e., $y\equiv0$. In the theory of adaptive control, the latter is known as {\em zero dynamics} \cite{IlchmannRyan}. In particular, the zero dynamics are exponentially stable, if $\alpha<0$.}
{\item\label{rem:system_class2} The Lumer–Phillips theorem \cite[Thm.~3.15]{EngeNage00} further implies that, if Assumptions~\ref{ass:system_class}\eqref{ass:passivity}\&\eqref{ass:3} hold, then $\cR(\Af|_{\ker\Cf}-\beta I)=X$ for all $\beta\ge\alpha$. Likewise, under Assumption~\ref{ass:system_class}\eqref{ass:passivity}, the existence of some $\beta\ge\alpha$ with $\cR(\Af|_{\ker\Bf}-\beta I)=X$ is equivalent to $\cR(\Af|_{\ker\Bf}-\beta I)=X$ for all $\beta\ge\alpha$.}
{\item\label{rem:system_class2a} For a~BCS $\syst$ satisfying Assumption~\ref{ass:system_class} it holds that $\Cf$ is onto by \eqref{ass:CBonto}, $\ker \Cf$ is dense in $X$ by \eqref{ass:1}\&\eqref{ass:3}, and, by \eqref{ass:3}, $\cR(\Af|_{\ker\Cf}-\beta I)=X$ for  some $\beta\geq\alpha$. Further, \eqref{ass:passivity} implies that $\ker (\Af|_{\ker\Cf}-\beta I)=\{0\}$. Then $\dot{x}(t)=\Af x(t)$, $y(t)=\Cf x(t)$ is a~boundary control system in the sense of \cite[Def.~10.1.1]{TucsnakWeiss}.\\
Likewise, if the BCS $\syst$ satisfies Assumption~\ref{ass:system_class}\eqref{ass:passivity}\&\eqref{ass:CBonto} as well as $\cR(\Af|_{\ker\Bf}-\beta I)=X$ for some $\beta>\alpha$, then $\dot{x}(t)=\Af x(t)$, $u(t)=\Bf x(t)$ is a~boundary control system in the sense of \cite[Def.~10.1.1]{TucsnakWeiss}.}
		\item\label{rem:system_class3} The operator \eqref{eq:CBop}  is onto if, and only if,
there exist $P,Q\in\cL({U},\cD(\Af))$ with
		\begin{equation}\label{eq:onto}
		\begin{bmatrix}
		\Bf\\
		\Cf
		\end{bmatrix}\begin{bmatrix}
		P & Q
		\end{bmatrix}=\begin{bmatrix}
		I_{{U}} & 0\\
		0 & I_{{U}}
		\end{bmatrix}.
		\end{equation}
\item\label{item:wpls} An oftentimes considered class in infinite-dimensional linear systems theory is that of {\em well-posed linear systems}, see e.g.~\cite{Staff05}. That is, {in addition to $\Af|_{\ker\Bf}$ generating a~strongly continuous semigroup}, the controllability map, the observability map and the input-output map are bounded operators. Note that we do not impose such a~well-posedness assumption throughout this article.
		\item\label{item:closedAf} \tb{Note that for a general boundary control system $\syst$, the graph of the operator $\Af$ need not be closed in $X\times X$. However, if conversely, $\Af$ has a closed graph in $X\times X$, then  $\cD(\Af)$ equipped with the graph norm $\|x\|_{\cD(\Af)}=\left(\|x\|_{X}+\|\Af x\|_{X}\right)^{1/2}$ is a Hilbert space, which is continuously embedded in $X$ and $\Af:\cD(\Af)\to X$ is bounded.}
			\end{enumerate}
\end{remark}

\begin{example}
	There are several systems which fit in our description. A class of examples of hyperbolic type is given by so-called {\it port-Hamiltonian systems} such as the {\it lossy transmission line}
	\begin{align*}
	\tfrac{\partial V}{\partial \zeta}(\zeta,t)&=-L\tfrac{\partial I}{\partial t}(\zeta,t)-RI(\zeta,t),\\
	\tfrac{\partial I}{\partial \zeta}(\zeta,t)&=-C\tfrac{\partial V}{\partial t}(\zeta,t)-GV(\zeta,t),\\
	u(t)&=\begin{pmatrix}
	V(a,t)\\
	V(b,t)
	\end{pmatrix},\qquad
	y(t)=\begin{pmatrix}
	I(a,t)\\
	-I(b,t)
	\end{pmatrix},
	\end{align*}
	where $V$ and $I$ are the voltage and the electric current at a point $\zeta$ of a segment $(a,b)$ over the time $t$. A precise definition of port-Hamiltonian systems will be given in Section~\ref{subsec:phs}.\smallskip
	
		In  Section \ref{subsec:ps} we will also apply the theoretical results to parabolic systems given through a general second-order elliptic operator on a \tb{bounded domain $\Omega$ with Lipschitz boundary $\Gamma$}. A particular case is the heat equation,
	\begin{align*}
	\tfrac{\partial x}{\partial t}(t,\zeta)&=\Delta x(t,\zeta),\\
	 u(t)\,\tb{w(\zeta)}&=\nu^\top\grad x(t,\zeta)|_{\Gamma},\qquad	y(t)=\int_{\Gamma}{x(t,\zeta)\,\tb{\overline{w(\xi)}}\,\mathrm{d}\zeta},
	\end{align*}
	where $\nu$ refers to the outward normal along the boundary, \tb{ and $w\in W^{-1/2,2}(\Gamma)$ is a~given weight}. The control variable is thus the heat flux at the boundary and the observation is the total temperature along the boundary \tb{weighted with $w$. Note that this causes that the Neumann boundary value is a~scalar multiple of the weight $w$ for all times $t$}.
\end{example}

\section{Funnel controller}
\label{sec:main}

The following definition presents the cornerstone of our controller, the class of admissible funnel boundaries.

\begin{definition}
	Let
	\begin{align*}
	\Phi&:=\left\{\varphi\in W^{2,\infty}([0,\infty))\st \varphi\text{ is real-valued with }\inf_{t\ge0}\varphi(t)>0\right\}.
	\end{align*}
	 With $\varphi\in\Phi$  
	we associate the \emph{performance funnel}
	$$\mathcal{F}_\varphi:=\{(t,e)\in[0,\infty)\times{U}\st\varphi(t)\|e\|<1\}.$$
	In this context we refer to $1/\varphi(\cdot)$ as \emph{funnel boundary}, see also Fig.~\ref{fig:funnel}.
\end{definition}
Now we define our controller, which is a slight modification of the original controller introduced in \cite{IlchmannRyan}.
 For $x_0\in\cD(\Af)$, we define the funnel controller as
\begin{equation}\label{eq:funnel_controller}
u(t)=\left(u_0+\frac{1}{1-\varphi_0^2\|e_0\|^2}\, e_0\right)p(t)-\frac{1}{1-\varphi(t)^2\|e(t)\|^2}\, e(t),
\end{equation}
where $\varphi_0=\varphi(0)$,  {$e(t)=y(t)-\yref(t)$}, $e_0:=\Cf x_0-\yref(0)$, $u_0:=\Bf x_0$, and $p$ is a function with compact support and $p(0)=1$. In the following we collect assumptions on the functions involved in the funnel controller and the initial value of the BCS $\syst$. Particularly, this includes that the expressions $\Cf x_0$ and $\Bf x_0$ are well-defined.

\begin{ass}[Reference signal, performance funnel, initial value]\label{ass:contr}
The initial value $x_0$ of the~BCS $\syst$ and the functions in the~controller \eqref{eq:funnel_controller} fulfill
	\begin{enumerate}[(i)]
		\item $\yref\in W^{2,\infty}([0,\infty);{U})$;
		\item $p\in W^{2,\infty}([0,\infty))$ with compact support and $p(0)=1$;
		\item $x_0\in\cD(\Af)$ and $\varphi\in\Phi$ with $\varphi(0)\|\Cf x_{0}-\yref(0)\|<1$.
	\end{enumerate}
\end{ass}
\tb{Observe that by using the funnel controller, if we measure the output with some disturbances $d\in W^{2,\infty}([0,\infty);U)$, that is, $y+d$, this will lead to the situation that we track the signal $\hat{y}_{\rm ref}:=\yref-d$.}
\begin{remark}\label{rem:funnel_class}
Apart from smoothness of the reference signal and performance funnel, Assumption \ref{ass:contr} basically includes two points:
\begin{enumerate}[(a)]
\item The initial value is ``smooth'', i.e., $x_0\in\cD(\Af)$. The reason is that --- especially for hyperbolic systems --- the initialization with $x_0\in X\setminus\cD(\Af)$ might result in a discontinuous output. This effect typically occurs when the semigroup generated by $\Af|_{\ker\Bf}$ is not analytic, such as, for instance, when a~wave equation is considered.
\item The output of the system at $t=0$ is already in the performance funnel.
\end{enumerate}
The funnel controller \eqref{eq:funnel_controller} differs from the classical one in \cite{IlchmannRyan} by the addition of the term
\[\left(u_0+\tfrac{1}{1-\varphi_0^2\|e_0\|^2}\, e_0\right)p(t)\]
for some (arbitrary) smooth function with $p(0)=1$ and compact support. This ensures that the controller is consistent with the initial value, that is, $u$ in \eqref{eq:funnel_controller} satisfies
\[u(0)=\left(u_0+\tfrac{1}{1-\varphi_0^2\|e_0\|^2}\, e_0\right)p(0)-\tfrac{1}{1-\varphi(0)^2\|e(0)\|^2}\, e(0)=u_0= \Bf x_0=\Bf x(0).\]
The funnel controller therefore requires the knowledge of the ``initial value of the input'' $u_0=\Bf x_0$. This means that, loosely speaking, the ``actuator position'' has to be known \tb{exactly} at the initial time, which is ---in the opinion of the authors--- no restriction from a practical point of view.\\ 
We would like to emphasize that the application of the funnel controller does not need any further ``internal information'' on the system, such as system parameters or the full knowledge of the initial state.
\end{remark}

The funnel controller (\ref{eq:funnel_controller}) applied to a~BCS $\syst$ results in the closed-loop system
\begin{subequations}\label{eq:funnel_problem}
	\begin{equation}\label{eq:funnel_problem1}
	\left.\begin{aligned}
	\dot{x}(t)&=\Af x(t),\qquad\qquad x(0)=x_0,\\
	\Bf x(t)&=u(t),\\
	\Cf x(t)&=y(t),\\
	e(t)&=y(t)-\yref(t), \quad e_0=\Cf x_{0}-\yref(0),\quad\varphi_0=\varphi(0),\\
	u(t)&=(\Bf x_0+\psi(\varphi_0,e_0))p(t)-\psi(\varphi(t),e(t)),
	\end{aligned}\right\}
	\end{equation}
	where
	\begin{equation}\label{eq:funnel_problem2}
	\begin{aligned}
	\psi(\varphi,e)&:=\frac{1}{1-\varphi^2\|e\|^2}\, e,\qquad
	\cD(\psi):=\{(\varphi,e)\in(0,\infty)\times{U}\st\varphi\|e\|<1\}.
	\end{aligned}
	\end{equation}
\end{subequations}
We see immediately that the closed-loop system is nonlinear and time-variant. In the sequel we present our main results which state that the funnel controller ``does its job'' in a~certain desired way. Note that this result includes the specification of the solution concept with which we are working. First, we show that the funnel controller applied to any system fulfilling Assumption~\ref{ass:system_class} has a~solution. Such a~solution however might not be bounded on the infinite time horizon. Thereafter, we show that boundedness on $[0,\infty)$ is guaranteed, if the constant $\alpha$ in the energy balance \eqref{eq:diss} is negative. The proofs of these results can be found in Section~\ref{sec:proof}.
\pagebreak[2]
\begin{theorem}[Feasibility of funnel controller, arbitrary $\alpha$]\label{thm:funnel1}
Let $T>0$ and let $\syst$ be a~BCS which satisfies Assumption~\ref{ass:system_class}. Further assume that the initial value $x_0$ and the functions $\yref$, $p$, $\varphi$ fulfill Assumption~\ref{ass:contr}.\\
Then the closed-loop system \eqref{eq:funnel_problem} possesses a~unique solution $x\in W^{1,\infty}([0,T];X)$ in the sense that for almost all $t\in [0,T]$, it holds that
		$\dot{x}$ is continuous at $t$, $x(t)\in\mathcal{D}(\Af)$ and \eqref{eq:funnel_problem} is satisfied.\\
	{Furthermore, the set $\{x_{0}\in D(\Af)\:\colon\: \varphi(0)\|\Cf x_{0}-\yref(0)\|<1\}$ is dense in $X$. }
\end{theorem}

\pagebreak[2]
\begin{remark}\label{rem:sol}
\hspace{1em}
\begin{enumerate}[(a)]
\item The solution concept which is subject of Theorem~\ref{thm:funnel1} is strong in the sense that the weak derivative of $x$ is evolving in the space $X$ and not in some \tb{extrapolation space $X_{-1}\supset X$} as used e.g.\ in \cite{TucsnakWeiss}.
\item \tb{A~direct consequence of the property $x\in W^{1,\infty}([0,T];X)$ of a~solution is} that $\Af x=\dot{x}\in L^{\infty}([0,T];X)$, whence $x\in L^{\infty}([0,T];D(\Af))$. Hence, for $u=\Bf x$ and $y=\Cf x$, it holds that $u,y\in L^{\infty}([0,T];{U})$. By the same argumentation, we see that the continuity of $\dot{x}$ at almost every\ $t\in[0,T]$ implies that $u$ and $y$ are continuous at almost every $t\in[0,T]$.
\item\label{item:sol1} For $T_1<T_2$ consider solutions $x_1$ and $x_2$ of the closed-loop system \eqref{eq:funnel_problem} on $[0,T_1]$ and $[0,T_2]$, respectively. Uniqueness of the solution implies that $x_1=x_2|_{[0,T_1]}$. As~a~consequence, there exists a~unique $x\in  W^{1,\infty}_{\rm loc}([0,\infty);X)$ with the property that, for all $T>0$, $x|_{[0,T]}$ is a~solution of (\ref{eq:funnel_problem}). Accordingly, the input satisfies $u\in  L^{\infty}_{\rm loc}([0,\infty);{U})$. Note that, by the fact that the output evolves in the funnel, we have that $y$ is essentially bounded, that is $y\in  L^{\infty}([0,\infty);{U})$.
\item The properties $u,y, \yref \in L^{\infty}([0,T];{U})$ imply that the error $e=y-\yref$ is uniformly bounded away from the funnel boundary. That is, there exists some $\varepsilon>0$ such that
	$$\varphi(t)\|e(t)\|<1-\varepsilon\;\text{ for almost all $t\in[0,T]$.}$$
\end{enumerate}\end{remark}
Though bounded on each bounded interval, the solution $x$ of the closed-loop system \eqref{eq:funnel_problem} might satisfy
	$$\limsup_{t\to\infty}\|x(t)\|=\infty,\qquad \limsup_{t\to\infty}\|u(t)\|=\infty$$
In the following we show that this unboundedness does not occur when the constant $\alpha$ in \eqref{eq:diss} in Assumption~\ref{ass:system_class} is negative.

\begin{theorem}\label{thm:funnel2}
Let a~BCS $\syst$ be given which satisfies Assumption~\ref{ass:system_class} such that Assumption~\ref{ass:system_class}\eqref{ass:passivity} holds with $\alpha<0$. Assume that the initial value $x_0$ and the functions $\yref$, $p$, $\varphi$ fulfill Assumption~\ref{ass:contr}. Then the solution $x:[0,\infty)\to X$ of the closed-loop system \eqref{eq:funnel_problem} (which exists by Theorem~\ref{thm:funnel1}) fulfills
\[x\in W^{1,\infty}([0,\infty);X)\;\text{ and }\; u=\Bf x\in L^{\infty}([0,\infty);{U}).\]
\end{theorem}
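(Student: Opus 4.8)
The plan is to reduce everything to a single dissipation estimate for a suitably \emph{shifted} state; the point of the shift is to turn the measured output into the tracking error and to push the reference signal into a bounded inhomogeneity, and the sign $\alpha<0$ is then exactly what is needed to close a Gronwall argument uniformly on $[0,\infty)$. As a starting point I would invoke Theorem~\ref{thm:funnel1} together with Remark~\ref{rem:sol}\eqref{item:sol1}: there is a unique global solution $x\in W^{1,\infty}_{\rm loc}([0,\infty);X)$ of \eqref{eq:funnel_problem} whose error $e=\Cf x-\yref$ stays in the funnel, so that $\|e(t)\|\le\varphi(t)^{-1}\le(\inf_{t\ge0}\varphi(t))^{-1}$ and hence $e,y\in L^\infty([0,\infty);\C^m)$. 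It remains only to upgrade the local bounds on $x$ and $u$ to global ones.

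First I would use the right inverse from Remark~\ref{rem:system_class}\eqref{item:PQ}: pick $Q\in\cL(\C^m,\cD(\Af))$ with $\Bf Q=0$ and $\Cf Q=I_m$, and set $\tilde x(t):=x(t)-Q\yref(t)$. Then $\tilde x\in W^{1,\infty}_{\rm loc}([0,\infty);X)$ takes values in $\cD(\Af)$, satisfies $\Bf\tilde x=u$ and $\Cf\tilde x=e$, and
\begin{equation*}
\dot{\tilde x}=\Af\tilde x+f,\qquad f:=\Af Q\yref-Q\dot\yref,
\end{equation*}
where $f\in L^\infty([0,\infty);X)$ because $\Af Q\in\cL(\C^m,X)$ and $\yref,\dot\yref$ are bounded. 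The core step is then the dissipation estimate. Writing $w:=(\Bf x_0+\psi(\varphi_0,e_0))p$, which is bounded with compact support, and using Assumption~\ref{ass:system_class}\eqref{ass:passivity} on $\tilde x$ together with $u=w-\psi(\varphi,e)$, I would compute for almost every $t$
\begin{align*}
\tfrac12\ddt\|\tilde x\|^2
&=\re\scpr{\Af\tilde x+f}{\tilde x}
\le\re\scpr{u}{e}+\alpha\|\tilde x\|^2+\re\scpr{f}{\tilde x}\\
&=\re\scpr{w}{e}-\frac{\|e\|^2}{1-\varphi^2\|e\|^2}+\alpha\|\tilde x\|^2+\re\scpr{f}{\tilde x}.
\end{align*}
The decisive gain term $-\|e\|^2/(1-\varphi^2\|e\|^2)\le0$ now carries the correct sign, and the only indefinite term $\re\scpr{f}{\tilde x}$ is absorbed by Young's inequality at the expense of half of the damping $\alpha\|\tilde x\|^2$. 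Discarding the nonpositive gain term and setting $V:=\tfrac12\|\tilde x\|^2$ yields $\dot V\le g-|\alpha|V$ with $g\in L^\infty([0,\infty))$, whence Gronwall's lemma gives $\sup_{t\ge0}\|\tilde x(t)\|<\infty$ and therefore $\sup_{t\ge0}\|x(t)\|<\infty$. This is the only place where $\alpha<0$ enters, and it is essential: for $\alpha\ge0$ the estimate degenerates and the growth described before the statement may occur.

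The bounds $u=\Bf x\in L^\infty$ and $\dot x=\Af x\in L^\infty$ I would then obtain by feeding the now-global bound $\sup_{t\ge0}\|x(t)\|<\infty$ back into the a~priori estimates established in the proof of Theorem~\ref{thm:funnel1}: those estimates control $\|x\|_{W^{1,\infty}([0,T];X)}$ and $\|u\|_{L^\infty([0,T];\C^m)}$ in terms of the data and of $\sup_{[0,T]}\|x\|$, so that uniformity of the latter in $T$ forces uniformity of the former, giving $x\in W^{1,\infty}([0,\infty);X)$ and $u\in L^\infty([0,\infty);\C^m)$.

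I expect the main obstacle to be the bookkeeping that makes the dissipation estimate close \emph{uniformly} rather than the Gronwall step itself. The crucial design choice is to shift by $Q\yref$ so that the singular feedback $\psi(\varphi,e)$ pairs with $e$ and produces the nonpositive term $-\|e\|^2/(1-\varphi^2\|e\|^2)$, instead of the indefinite contribution $\re\scpr{\psi}{\yref}$ that a naive energy estimate for $\|x\|^2$ would generate and that blows up as $e$ approaches the funnel boundary. With the shift in place, the compact support of $w$ (hence of $p$) and the boundedness of $f$ keep every remaining term either nonpositive or bounded, and $\alpha<0$ supplies the decay needed to convert the local estimates of Theorem~\ref{thm:funnel1} into the claimed global ones.
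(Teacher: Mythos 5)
Your Step~1 is correct, and it is a genuinely leaner route to the bound $x\in L^\infty([0,\infty);X)$ than the paper's: you shift only by $Q\yref$, so that $\Bf\tilde x=u$, $\Cf\tilde x=e$, and the passivity inequality \eqref{eq:diss} produces the nonpositive gain term $-\|e\|^2/(1-\varphi(t)^2\|e(t)\|^2)$ directly, after which Young's inequality and Gr\"onwall close the estimate using $\alpha<0$. The paper instead runs the analogous estimate for the fully transformed state $z(t)=\varphi(t)\bigl(x(t)-Q\yref(t)-P(\Bf x_0+\psi(\varphi_0,e_0))p(t)\bigr)$ of Lemma~\ref{lem:transf}, for which the feedback becomes the \emph{autonomous} boundary condition $\Bf z+\phi(\Cf z)=0$ of \eqref{eq:closed_loop_op}, and then applies the Gr\"onwall-type Lemma~\ref{lem:groenwall}. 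Up to this point the two arguments are equivalent in substance.

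The gap is in your final step. You claim that $\dot x=\Af x\in L^\infty([0,\infty);X)$ and $u\in L^\infty([0,\infty);\C^m)$ follow by ``feeding the global bound $\sup_{t\ge0}\|x(t)\|<\infty$ back into the a~priori estimates established in the proof of Theorem~\ref{thm:funnel1}''. No such estimates are established there: that proof consists of invoking Lemma~\ref{lem:gen_kato} (Kato's theorem) for existence and uniqueness of $z\in W^{1,\infty}([0,T];X)$ and undoing the change of variables; it contains no bound of $\|\dot x\|_{L^\infty([0,T];X)}$ or $\|u\|_{L^\infty([0,T];\C^m)}$ in terms of the data and $\sup_{[0,T]}\|x\|_X$. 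Nor could it: controlling $\dot x=\Af x$ amounts to controlling $x$ in the graph norm of $\Af$, which an $X$-norm bound does not provide; and boundedness of $u$ does not follow from boundedness of $e$ either, since $\psi(\varphi(t),e(t))$ blows up as $\varphi(t)\|e(t)\|\to1$. What is missing is a second, separate dissipation argument for the derivative, which is exactly the paper's Step~2: estimate $\tfrac12\ddt\|z(t+h)-z(t)\|_X^2$ using the dissipativity of the (autonomous, m-dissipative) operator $\cA$, apply Lemma~\ref{lem:groenwall} again --- this is where $\alpha<0$ is needed a second time, to keep the resulting bound uniform in $t$ --- then divide by $h$ and let $h\to0$ to get $\dot z\in L^\infty([0,\infty);X)$; finally $x\in L^\infty([0,\infty);\cD(\Af))$ together with $\Bf\in\cL(\cD(\Af),\C^m)$ yields $u\in L^\infty$. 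Note also that this difference(-quotient) argument is precisely where the paper's $\varphi$-weighting pays off: in your unscaled variable $\tilde x$ the nonlinearity $\psi(\varphi(t),\cdot)$ is time-varying, its monotonicity (Proposition~\ref{prop:funnel_m_monotone} after rescaling) holds only at frozen $\varphi$, so the cross terms coming from $\psi(\varphi(t+h),e(t+h))-\psi(\varphi(t),e(t))$ do not have a sign without extra bookkeeping --- to make this step rigorous you would essentially have to reintroduce the paper's transformation.
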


\begin{remark}\label{rem:k0}
When the input and output of a~control system have different physical dimensions, it might be essential that the funnel controller is
dilated by some constant $k_0>0$. More precisely, one might consider the controller
\begin{equation}\label{eq:funnel_controllerk0}
u(t)=\left(u_0+\frac{k_0}{1-\varphi_0^2\|e_0\|^2}\, e_0\right)p(t)-\frac{k_0}{1-\varphi(t)^2\|e(t)\|^2}\, e(t).
\end{equation}
The feasibility of this controller is indeed covered by Theorems~\ref{thm:funnel1} \& \ref{thm:funnel2}, which can be seen by the following argumentation: Consider the BCS $\syst$ with transformed input $\tilde{u}=k_0^{-1} u$. That is, a system $\mathfrak{S}=(\Af,k_0^{-1}\Bf,\Cf)$. Providing $X$ with the equivalent inner product $\langle \cdot ,\cdot \rangle_{\rm new}:=k_0^{-1}\langle \cdot ,\cdot \rangle_{X}$, we obtain
\[		\re \langle \Af x, x\rangle_{\rm new}\leq\re\langle k_0^{-1}\Bf x,\Cf x\rangle_{{U}}+\alpha \|x\|_{\rm new}^2\;\text{ for all $x\in\cD(\Af)$}.\]
Consequently, by Theorem~\ref{thm:funnel1}, the funnel controller
\[\tilde{u}(t)=\Big(\underbrace{\tilde{u}_0}_{=k_0^{-1}u_0}+\frac{1}{1-\varphi_0^2\|e_0\|^2}\, e_0\Big)p(t)-\frac{1}{1-\varphi(t)^2\|e(t)\|^2}\, e(t)\]
results in feasibility of the closed-loop. Now resolving $\tilde{u}=k_0^{-1} u$ in the previous formula, we obtain exactly the controller \eqref{eq:funnel_controllerk0}. Further note that, by the same argumentation together with Theorem~\ref{thm:funnel2}, we obtain that all the trajectories are bounded in the case where $\alpha<0$.
\end{remark}

\section{Some PDE examples}\label{sec:systex}
We now present three different system classes for which we can apply the previously presented results. The first two have state variables which are described by hyperbolic PDEs and the third one by a parabolic PDE.

\subsection{Port-Hamiltonian systems in one spatial variable}
\label{subsec:phs}

The systems considered in this article encompass a~class of port-Hamiltonian hyperbolic systems in one spatial dimension with boundary control and observation, which has been treated in \cite{Augner,AugnerDis,AugnerJacob,BastCoro16,JacobZwart} and is the subject of the subsequent definition. Typically they are considered in a bounded interval $(a,b)\subset\R$. We may consider $\I:=(a,b)=(0,1)$ without loss of generality.
	
\begin{definition}[Port-Hamiltonian hyperbolic BCS in one spatial variable]\label{def:phs}
	Let $N,d\in\N$ and for $k=0,\dots,N$ consider $P_k\in\K^{d\times d}$. We assume that $P_k=(-1)^{k+1}P_k^\ast$ for $k\neq0$ with $P_N$ invertible and $P_0+P_0^*\leq0$. Further, let $W_B,W_C\in\K^{Nd\times2Nd}$ be such that the matrix
	$$W:=\begin{bmatrix}
	W_B\\
	W_C
	\end{bmatrix}\in\K^{2Nd\times2Nd}$$
	is invertible.
	\begin{enumerate}[(a)]
		\item Let $\cH\in L^\infty\left([0,1];\K^{d\times d}\right)$ and assume that there exist constants $m,M>0$ such that  $\cH(\zeta)=\cH(\zeta)^\ast$  and $m I_d\leq \cH(\zeta)\leq M I_d$ for almost every $\zeta\in[0,1]$. We consider $X:=L^2([0,1];\K^d)$ equipped with the scalar product induced by $\cH$,
		\begin{equation}\label{eq:energy_innerprod}
		\langle y,x\rangle_X:=\langle y,\cH x\rangle_{L^2}=\int_0^1{y(\zeta)^\ast\cH(\zeta)x(\zeta)\ds{\zeta}}\quad\forall\, x,y\in L^2([0,1];\K^d).
		\end{equation}
		The \emph{port-Hamiltonian operator} $\Af:\cD(\Af)\subset X\rightarrow X$ is given by
		\begin{subequations}\label{eq:U_operator_dom}
			\begin{equation}\label{eq:U_operator}
			\Af x=\sum_{k=0}^NP_k\tfrac{\partial^k}{\partial\zeta^k}(\cH x)\quad\forall\, x\in\cD(\Af),
			\end{equation}
			with domain
			\begin{equation}\label{eq:U_dom}
			\cD(\Af)=\left\{x\in X\st\cH x\in W^{N,2}([0,1];\K^d)\right\}.
			\end{equation}
		\end{subequations}
		\item Denote the spatial derivative of $f$ by $f'$. For a port-Hamiltonian operator $\Af$ and $x\in\cD(\Af)$ we define the \emph{boundary flow} $f_{\partial,\cH x}\in\K^{Nd}$ and \emph{boundary effort} $e_{\partial,\cH x}\in\K^{Nd}$ by
		\begin{equation}\label{eq:boundary_flow_effort}
		\begin{pmatrix}
		f_{\partial,\cH x}\\
		e_{\partial,\cH x}
		\end{pmatrix}:=R_0\begin{pmatrix}
		(\cH x)(1)\\
		(\cH x)'(1)\\
		\vdots\\
		(\cH x)^{(N-1)}(1)\\
		(\cH x)(0)\\
		(\cH x)'(0)\\
		\vdots\\
		(\cH x)^{(N-1)}(0)
		\end{pmatrix},
		\end{equation}
		where the matrix $R_0\in\K^{2Nd\times 2Nd}$ is defined by
		\begin{equation}\label{eq:R0}
		R_0:=\dfrac{1}{\sqrt{2}}\begin{bmatrix}
		\Lambda & -\Lambda\\
		I_{Nd} & I_{Nd}
		\end{bmatrix},
		\end{equation}
		with
		$$\Lambda:=\begin{bmatrix}
		P_1 & P_2 & \cdots & \cdots & P_N\\
		-P_2 & -P_3 & \cdots & -P_N & 0\\
		\vdots & \vdots &\vdots & \vdots & \vdots\\
		(-1)^{N-1}P_N & 0 &\cdots& 0 & 0
		\end{bmatrix}.
		$$
		\item For a port-Hamiltonian operator $\Af$ we define the \emph{input map} $\Bf:\cD(\Af)\subset X\rightarrow\K^{Nd}$ and the \emph{output map} $\Cf:\cD(\Af)\subset X\rightarrow\K^{Nd}$ as
	\[		\Bf x:=W_B\begin{pmatrix}
		f_{\partial,\cH x}\\
		e_{\partial,\cH x}
		\end{pmatrix},\qquad
		\Cf x:=W_C\begin{pmatrix}
		f_{\partial,\cH x}\\
		e_{\partial,\cH x}
		\end{pmatrix}.\]
		We call $\syst$ a \emph{port-Hamiltonian hyperbolic BCS in one spatial variable} to which we associate \tb{the BCS $\syst$} 
		with state $x(t):=x(t,\cdot)\in X$, $t\geq0$  {and initial value $x_{0}$}.
	\end{enumerate}
\end{definition}
{The above definition directly implies the following property for port-Hamiltonian systems.}

\begin{lemma}\label{lemm:right_inverse}
	{Let $(\Af,\Bf,\Cf)$ be a port-Hamiltonian hyperbolic BCS. Then $\Af$ is closed and $\Bf,\Cf$ are bounded from $D(\Af)$ to $\K^{Nd}$.}
	Further, there exist $P,Q\in \cL(\K^{2Nd},\cD(\Af))$ {such that}
	$$
	\begin{array}{ccccccc}
	\Bf P&=&I_{Nd},&\quad\Bf Q&=&0,\\
	\Cf P&=&0,&\quad\Cf Q&=&I_{Nd}.\\
	\end{array}
	$$
	Consequently, $\Af P,\Af Q\in\cL(\K^{Nd},X)$.
\end{lemma}

\begin{proof}
\tb{The closedness of $\Af$ follows directly from completeness of Sobolev spaces. The boundedness of $\Bf$ and $\Cf$ is an immediate consequence of the Sobolev embedding into the continuous functions.}
	Consider the trace operator $\cT:W^{N,2}([0,1];\K^d)\rightarrow\K^{2Nd}$ as the linear map
	$$
	\cT z=\begin{pmatrix}
	z(1)\\
	z'(1)\\
	\vdots\\
	z^{(N-1)}(1)\\
	z(0)\\
	z'(0)\\
	\vdots\\
	z^{(N-1)}(0)
	\end{pmatrix},
	$$
	so that
	$$\begin{bmatrix}
	\Bf x\\
	\Cf x
	\end{bmatrix}=WR_0\cT \cH x,\qquad \text{where }W=\begin{bmatrix}
	W_B\\
	W_C
	\end{bmatrix}.$$
	Let  $\{e_j\}_{j=1}^{2Nd}$ be the standard orthogonal basis in $\K^{2Nd}$ and choose some functions $f_j\in W^{N,2}([0,1];\K^d)$ with $\cT(f_j)=e_j$ for $j=1,\dots,2Nd$. Since $W,R_0$ are invertible, we can define $M_p,M_q\in\K^{2Nd\times Nd}$ by
	$$M_p=R_0^{-1}W^{-1}\begin{bmatrix}
	I_{Nd}\\
	0
	\end{bmatrix},\quad M_q=R_0^{-1}W^{-1}\begin{bmatrix}
	0\\
	I_{Nd}
	\end{bmatrix}.$$
	Let $M_p,M_q$ be decomposed as
	$$
	M_p=\begin{bmatrix}
	M_{p,1}\\
	\vdots\\
	M_{p,2Nd}
	\end{bmatrix},\quad M_q=\begin{bmatrix}
	M_{q,1}\\
	\vdots\\
	M_{q,2Nd}
	\end{bmatrix},
	$$
	with $M_{p,j},M_{q,j}\in\K^{1\times Nd}$ for $j=1,\dots,2Nd$. Now set for almost every $\zeta\in[0,1]$,
	\begin{align*}
	(Pu)(\zeta)&:=\cH^{-1}(\zeta)\sum_{j=1}^{2Nd}M_{p,j}uf_j(\zeta)\quad \forall u\in\K^{Nd},\\
	(Qy)(\zeta)&:=\cH^{-1}(\zeta)\sum_{j=1}^{2Nd}M_{q,j}yf_j(\zeta)\quad \forall y\in\K^{Nd}.
	\end{align*}
	By construction $P,Q$ have the desired properties.
\end{proof}





The class of impedance passive port-Hamiltonian systems meets the requirements of Assumption \ref{ass:system_class}. We summarize this in the following statement.

\begin{theorem}\label{thm:funnel_phs}
	Any~port-Hamiltonian hyperbolic BCS $\syst$ in one spatial variable satisfies Assumption~\ref{ass:system_class}. If, moreover, there exists some
$\mu>0$ such that $P_0+P_0^*+\mu I$ is negative definite, then Assumption~\ref{ass:system_class}\eqref{ass:passivity} {even} holds for some $\alpha<0$.
\end{theorem}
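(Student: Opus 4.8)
The plan is to verify the three items of Assumption~\ref{ass:system_class} separately, treating the semigroup generation \eqref{ass:sg} as the main obstacle; the other two are essentially packaged in Lemma~\ref{lemm:right_inverse} and Remark~\ref{rem:dense}.

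\emph{Passivity.} The estimate \eqref{eq:impedance_passive} recorded in Remark~\ref{rem:dense} already establishes Assumption~\ref{ass:system_class}\eqref{ass:passivity} with $\alpha=0$, since $P_0+P_0^*\le0$. For the refined assertion I would bound the remainder term quantitatively: writing $\re\langle P_0\cH x,\cH x\rangle_{L^2}=\tfrac12\langle(P_0+P_0^*)\cH x,\cH x\rangle_{L^2}$ and invoking $P_0+P_0^*\le-\mu I$ gives $\re\langle P_0\cH x,\cH x\rangle_{L^2}\le-\tfrac\mu2\|\cH x\|_{L^2}^2$. Since $\cH\ge mI$ implies $\cH^2\ge m\cH$ pointwise, one has $\|\cH x\|_{L^2}^2=\langle x,\cH^2x\rangle_{L^2}\ge m\langle x,\cH x\rangle_{L^2}=m\|x\|_X^2$, so \eqref{eq:impedance_passive} yields \eqref{eq:diss} with $\alpha=-\mu m/2<0$.

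\emph{The algebraic and trace conditions.} Surjectivity of the operator \eqref{eq:CBop} is immediate from Lemma~\ref{lemm:right_inverse}, whose $P,Q$ provide a bounded right inverse (cf.\ Remark~\ref{rem:system_class}\eqref{item:PQ}), and density of $\ker\Bf\cap\ker\Cf$ is again Remark~\ref{rem:dense}. For the graph-norm continuity of $\Cf$ I would use the identity $\Cf x=W_CR_0\cT(\cH x)$ from the proof of Lemma~\ref{lemm:right_inverse} together with boundedness of the trace $\cT\colon W^{N,2}([0,1];\C^d)\to\C^{2Nd}$, reducing the claim to the bound $\|\cH x\|_{W^{N,2}}\le c\,\|x\|_{\cD(\Af)}$. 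The latter follows by setting $z=\cH x$, solving $P_Nz^{(N)}=\Af x-\sum_{k<N}P_kz^{(k)}$ (using invertibility of $P_N$) and absorbing the intermediate derivatives through the interpolation (Ehrling) inequality on $[0,1]$; incidentally this shows the graph norm of $\Af$ to be equivalent to $x\mapsto\|x\|_X+\|\cH x\|_{W^{N,2}}$, so that $\Af$ is closed.

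\emph{Generation of the semigroup.} By the Lumer–Phillips theorem in the form of Remark~\ref{rem:system_class}\eqref{item:LP}, it suffices to show that $\Af_0:=\Af|_{\ker\Bf}$ is densely defined, dissipative, and that $\cR(\lambda I-\Af_0)=X$ for some $\lambda>0$. Density follows from Remark~\ref{rem:dense}, and dissipativity from \eqref{eq:impedance_passive} upon setting $\Bf x=0$. The surjectivity is the heart of the matter, and the step I expect to be most delicate: I would regard $(\lambda I-\Af)x=f$, after the substitution $x=\cH^{-1}z$, as a regular linear ODE system of order $N$ for $z$ with $L^\infty$-coefficients and invertible leading coefficient $P_N$, whose homogeneous solution space $V$ has dimension $Nd$. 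Dissipativity renders $\lambda I-\Af_0$ injective, hence $\Bf|_V\colon V\to\C^{Nd}$ is injective and therefore bijective by the equality of dimensions $\dim V=Nd$; correcting any particular solution of $(\lambda I-\Af)x=f$ by a suitable element of $V$ then produces a solution lying in $\ker\Bf$. This yields m-dissipativity and hence Assumption~\ref{ass:system_class}\eqref{ass:sg}. Alternatively, one may bypass the dimension count by invoking the generation results for port-Hamiltonian operators established in \cite{JacobZwart,Augner}.
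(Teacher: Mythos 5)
Your proof is correct, and on two of the three conditions it follows the same path as the paper: impedance passivity with $\alpha=0$ comes from the integration-by-parts inequality \eqref{eq:impedance_passive} of Remark~\ref{rem:dense}, and surjectivity of \eqref{eq:CBop} together with density of $\ker\Bf\cap\ker\Cf$ come from Lemma~\ref{lemm:right_inverse} and Remark~\ref{rem:dense}. The genuine divergence is in Assumption~\ref{ass:system_class}\eqref{ass:sg}: the paper disposes of semigroup generation in one line by citing \cite[Theorem~2.3]{AugnerJacob}, whereas you prove the Lumer--Phillips range condition directly, rewriting $(\lambda I-\Af)x=f$ as an $N$-th order linear ODE for $z=\cH x$ with $L^\infty$ coefficients and invertible leading coefficient, noting that the homogeneous solution space $V$ satisfies $\dim V=Nd$, that dissipativity makes $\Bf|_V$ injective and hence bijective onto $\C^{Nd}$, and then correcting a particular solution into $\ker\Bf$. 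This is self-contained (it essentially reproduces the generation argument for port-Hamiltonian operators in \cite{JacobZwart}) at the price of invoking Carath\'eodory theory for ODEs with merely measurable coefficients; the paper's citation is shorter but opaque. Your treatment is also more complete in one respect that the paper glosses over: Assumption~\ref{ass:system_class}\eqref{ass:CBonto} also demands graph-norm continuity of $\Cf$ on $\cD(\Af|_{\ker\Bf})$, which the paper's proof never verifies, while you derive it from boundedness of the trace $\cT:W^{N,2}([0,1];\C^d)\to\C^{2Nd}$ and the Ehrling-type absorption estimate $\|\cH x\|_{W^{N,2}}\le c\,\|x\|_{\cD(\Af)}$ (which in passing also yields closedness of $\Af$). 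Finally, your explicit constant $\alpha=-\mu m/2$ differs from the paper's $\alpha=-\mu m/(2M)$; your derivation via the pointwise inequality $\cH^2\ge m\cH$ is correct, and since the theorem only claims existence of some $\alpha<0$, the discrepancy between the two constants is immaterial.
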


\begin{proof}

{
Integration by parts gives
	\begin{equation}\label{eq:impedance_passive}
	\re\langle\Af x,x\rangle_X\leq\re\langle\Bf x,\Cf x\rangle_{\K^{Nd}}+\re\langle P_0\cH x,\cH x\rangle_{L^2}\quad \forall\, x\in\cD(\Af).
	\end{equation}
	}Since $P_0+P_0^*\leq0$, it follows that the BCS fulfills Assumption~\ref{ass:system_class}\eqref{ass:passivity} with $\alpha\leq0$.
	Further,  $\Af|_{\ker\Bf}$ generates a (contractive) semigroup by \cite[Thm.~2.3]{AugnerJacob}{, whence $\Af|_{\ker\Cf}$ generates a (contractive) semigroup by Proposition \ref{prop:ass} and the fact that $\dim U<\infty$. Thus }$\syst$ satisfies Assumption~\ref{ass:system_class}\eqref{ass:3}.
Lemma \ref{lemm:right_inverse} {guarantees the boundedness of $\Bf$ and $\Cf$ as well as} the existence of $P,Q$ such that (\ref{eq:onto}) holds. This implies that the condition in Assumption~\ref{ass:system_class}\eqref{ass:CBonto} is satisfied by $\syst$.\\
If, moreover, $P_0+P_0^*+\mu I$ is negative definite for some $\mu>0$, then we can conclude from \eqref{eq:impedance_passive} that
Assumption~\ref{ass:system_class}\eqref{ass:passivity} holds with $\alpha:=-\mu m/(2M)$, where $m,M>0$ are given in Definition~\ref{def:phs}.
\end{proof}
{By Theorem~\ref{thm:funnel_phs}, Theorems~\ref{thm:funnel1} \& \ref{thm:funnel2} can be applied to systems port-Hamiltonian hyperbolic BCSs}. {Indeed}, if the initial value $x_0$ and the functions $\yref$, $p$, $\varphi$ fulfill Assumption~\ref{ass:contr}, the application of the funnel controller \eqref{eq:funnel_controller} results in a~unique global solution $x\in W^{1,\infty}_{\rm loc}([0,\infty);X)$ in the sense of Theorem~\ref{thm:funnel1}. If, moreover, $P_0+P_0^*+\mu I$ is negative definite for some $\mu>0$, then $x,\dot{x}$ and $u$ are moreover essentially bounded by Theorem~\ref{thm:funnel2}.

\subsection{Parabolic systems with Neumann boundary control}
\label{subsec:ps}

A particular case of the boundary controlled heat equation was already discussed in \cite{ReisSelig}, with a slightly different funnel controller. Here we present a parabolic problem which comprises linear parabolic equations with Neumann boundary control and Dirichlet boundary observation. Namely, for a~domain $\Omega\subset\R^n$ with outward normal $\nu$ along the boundary $\Gamma$, and some given functions $a$, $b$, $c$ and $w_i$ for $i=1,\ldots,m$, we consider a~model problem of the form
\begin{equation}\label{eq:str_parab}
    \left.\begin{aligned}
\pddt x(t,\xi)=&\,\divg a(\xi) \grad x(t,\xi)\\&\,+b(\xi)^\top \grad x(t,\xi)+c(\xi)\,x(t,\xi),&&(t,\xi)\in[0,\infty)\times \Omega\\[1mm]
\nu^\top a(\xi)\grad x(t,\xi)=&\sum_{i=1}^mu_i(t)w_i(\xi), &&(t,\xi)\in[0,\infty)\times \Gamma,\\ 
\int_{\Gamma}\overline{w_i(\xi)}x(t,\xi)\mathrm{d}\sigma=&\,y_i(t), &&t\in[0,\infty),\\[-2mm]&&& \,i=1,\ldots,m.
    \end{aligned}\right\}
\end{equation}
Hereby, we impose some assumptions on the domain and the involved functions:
\begin{ass}\label{ass:parab}
\begin{enumerate}[(i)]\
\item For $n\in\N$, $\Omega\subset\R^n$ is a bounded domain with Lipschitz boundary $\Gamma$;
\item $a\in L^\infty(\Omega;\K^{n\times n})$ has values in the set of positive definite Hermitian matrices and $\zeta\mapsto a(\zeta)^{-1}$ is essentially bounded as well;
\item $b\in L^\infty(\Omega;\K^{n})$, $c\in L^\infty(\Omega;\K)$;
\item $m\in\N$, and $w_1,\ldots,w_m\in W^{-1/2,2}(\Gamma):=W^{1/2,2}(\Gamma)'$ are linearly independent.
\end{enumerate}
\end{ass}
In the following, we construct a BCS~$\syst$ from the model problem~\eqref{eq:str_parab}. 
Loosely speaking, the operator $\Af$ is given by
$x\mapsto\divg a \grad x+b^\top \grad x+cx$, whereas $\Bf$ and $\Cf$ respectively correspond to an evaluation of the Neumann boundary trace and the weighted integral of the Dirichlet boundary trace. We further show that Green's identity implies that the resulting BCS fulfills the generalized impedance passivity condition \eqref{eq:diss}.\\
To construct the operators in a~rigorous way, we use the space of functions with square integrable weak divergence, i.e.,
$$H_{\rm div}(\Omega):=\{x\in L^2(\Omega;\K^n)\st \divg x\in L^2(\Omega) \},$$
which becomes a~Hilbert space in a~natural way. We collect some properties of $H_{\rm div}(\Omega)$ and traces, which are used for the construction of $\syst$ and to verify that this BCS fulfills Assumptions~\ref{ass:system_class}.
To this end, note that $W^{-1/2,2}(\Gamma)$ denotes the dual space of $W^{1/2,2}(\Gamma)$ by extending the inner product in $L^{2}(\Gamma)$.

\begin{proposition}\label{eq:hdiv}
Let $\Omega\subset\R^n$, $n\in \N$, be a bounded domain with Lipschitz boundary~$\Gamma$.
\begin{enumerate}[(a)]
    \item\label{eq:hdiv1} The operator $\widetilde{\cT}_{0}: C^\infty(\overline{\Omega})\to W^{1,\infty}(\Gamma)$ with $x\mapsto \left. x\right|_{\Gamma}$ extends to a~bounded and surjective operator ${\cT}_{0}: W^{1,2}(\Omega)\to W^{1/2,2}(\Gamma)$ \cite[Theorem 1.3]{GiraultRaviart}.
    \item\label{eq:hdiv2} The operator $\widetilde{\cT}_{0,\nu}: C^\infty(\overline{\Omega};\R^n)\to L^\infty(\Gamma)$ with $x\mapsto \left. x\right|_{\Gamma}$ extends to a~bounded and surjective operator ${\cT}_{0,\nu}: H_{\rm div}(\Omega)\to W^{-1/2,2}(\Gamma)$ \cite[Thm.~2.2~Cor.~2.4]{GiraultRaviart}.
    \item\label{eq:hdiv3} For all $x\in  H_{\rm div}(\Omega)$, $z\in  W^{1,2}(\Omega)$ it holds that
    \[\scpr{\divg x}{z}_{L^2(\Omega)}+\scpr{x}{\grad z}_{L^2(\Omega;\R^n)}=\scpr{{\cT}_{0,\nu}x}{{\cT}_{0}z}_{W^{-1/2,2}(\Gamma),W^{1/2,2}(\Gamma)},\]
    see \cite[Cor.~2.1]{GiraultRaviart}.
\end{enumerate}
\end{proposition}
We define the space
\begin{equation*}
    \cD(\Af)=\setdef{x\in W^{1,2}(\Omega)}{\begin{array}{l}a\cdot\grad x\in H_{\rm div}(\Omega)\;\;\wedge\\\; \cT_{0,\nu}(a \grad x)\in{\rm span}_{\K}(w_1,\ldots,w_m)\!\!\!\!\!\end{array}},
\end{equation*}
equipped with the norm 
\[\|x\|_{\cD(\Af)}=\left(\|x\|_{L^{2}(\Omega)}^{2}+\|a\grad x\|_{L^{2}(\Omega;\K^{n})}^{2}+\|\divg a\grad x\|_{L^{2}(\Omega)}^{2}\right)^{1/2}\]
and the operator
\begin{equation}
    \Af:\quad\cD(\Af)\to L^2(\Omega),x\mapsto \divg a \grad x+b^\top \grad x+cx.
    \label{eq:Afparab}
\end{equation}
Furthermore, we define operators $\Bf, \Cf:\cD(\Af)\to\K^m$  by
\begin{equation}\Bf x=(\lambda_1,\ldots,\lambda_m),\text{ where }\cT_{0,\nu}(a \grad x)=\lambda_1 w_1+\ldots,\lambda_mw_m,\label{eq:Bfparab}
\end{equation}
and
\begin{equation}\Cf x=\begin{pmatrix}\scpr{\cT_{0}x}{w_1}_{W^{1/2,2}(\Omega),W^{-1/2,2}(\Omega)}\\\vdots\\\scpr{\cT_{0}x}{w_m}_{W^{1/2,2}(\Omega),W^{-1/2,2}(\Omega)}\end{pmatrix}.\label{eq:Cfparab}
\end{equation}
 By Proposition \ref{eq:hdiv}, $\Af, \Bf$ and $\Cf$ are well-defined and continuous with respect to $\|\cdot\|_{\cD(\Af)}$. Moreover,  $\cD(\Af)$ is complete.
In the remaining part of this section we show that the BCS $\syst$ satisfies Assumptions~\ref{ass:system_class}. 
A convenient tool will be the~sesquilinear form $\fa_\beta:W^{1,2}(\Omega)\times W^{1,2}(\Omega)\to\,\K$, for $\beta\in\R$, defined by
\begin{equation}
\begin{aligned}
\fa_\beta
(x_1,x_2)=\scpr{a\grad x_1}{\grad x_2}_{L^2(\Omega;\R^n)}-\scpr{ b^\top\grad x_1-(\beta-c)x_1}{ x_2}_{L^2(\Omega)}.\label{eq:sesq}
\end{aligned}\end{equation}
It can immediately be verified that $\fa_\beta$ satisfies the following G\r{a}rding-type inequality,
\begin{multline} \re\left(\fa_\beta(x,x)\right)\geq C_1\, \|x\|^2_{W^{1,2}(\Omega)}+(\beta-C_2)\ \|x\|^2_{L^{2}(\Omega)}\quad\forall x\in W^{1,2}(\Omega), \beta\in\R,\label{eq:parab_sesqest}\end{multline}
for some constants $C_{1},C_{2}>0$ independent of $\beta$ and $x$.
Viewing a parabolic problem such as  \eqref{eq:str_parab} by means of forms is a well-known strategy, see e.g.\ \cite{ArenElst12,Arendtetal}. 
\begin{lemma}\label{lem:Aop}
Let Assumption~\ref{ass:parab} be valid, $\beta\in\R$, and let $\fa_\beta$ be the sesquilinear form defined by \eqref{eq:sesq}. 
\begin{enumerate}[(a)]
\item
Then $x\in\cD(\Af)$ if, and only if, there exists some $z\in L^2(\Omega)$ and $\lambda_1,\ldots,\lambda_m\in\K$, such that for all $x_{2}\in W^{1,2}(\Omega)$ it holds that
\begin{equation}
\fa_\beta(x,x_{2})=-\scpr{z}{x_{2}}_{L^2(\Omega)\!\!}+\sum_{k=1}^m\lambda_k\scpr{w_k}{\cT_0 x_{2}}_{W^{-1/2}(\Gamma),W^{1/2}(\Gamma)}.\label{eq:sesqformsys}    
\end{equation}
Further, the elements $z\in L^2(\Omega)$ and $\lambda_1,\ldots,\lambda_m\in\K$ with the above property are uniquely determined by $x\in\cD(\Af)$, and it holds that $(\Af-\beta I) x=z$ and $\Bf x=(\lambda_1,\ldots,\lambda_m)$.
\item The space $\cD(\Af)\cap\ker \Bf$ is dense in $W^{1,2}(\Omega)$.
\end{enumerate}
\end{lemma}
\begin{proof}
\begin{enumerate}[(a)]
\item As we can consider $c-\beta$ instead of $c$, it suffices to prove the statement for $\beta=0$.
If $x\in\cD(\Af)$, then Green's identity (see Proposition~\ref{eq:hdiv}\eqref{eq:hdiv3}) implies that for all $x_2\in W^{1,2}(\Omega)$, we have that
\[
\begin{aligned}
\scpr{\Af x}{x_2}_{L^2(\Omega)}=&\,\scpr{\divg a \grad x}{x_2}_{L^2(\Omega)}+
\scpr{b^\top \grad x+cx}{x_2}_{L^2(\Omega)}\\
=&\,-\scpr{a \grad x}{\grad x_2}_{L^2(\Omega;\R^n)}+
\scpr{b^\top \grad x+cx}{x_2}_{L^2(\Omega)}\\
&\quad+\scpr{{\cT}_{0,\nu}(a \grad x)}{{\cT}_{0}x_2}_{W^{-1/2,2}(\Gamma),W^{1/2,2}(\Gamma)}\\
=&\,-\fa_0(x,x_2)+\sum_{k=1}^m\lambda_{k}\scpr{ w_k}{{\cT}_{0}x_2}_{W^{-1/2,2}(\Gamma),W^{1/2,2}(\Gamma)}.
\end{aligned}
\]
On the other hand, suppose that $z\in L^2(\Omega)$ and $\lambda_1,\ldots,\lambda_m\in\K$ such that
\eqref{eq:sesqformsys} holds for all $x_2\in W^{1,2}(\Omega)$. In particular, for all $\varphi\in C^\infty_0(\Omega)$ it holds that
\[-\scpr{a \grad x}{\grad \varphi}_{L^2(\Omega)}=\scpr{z}{ \varphi}_{L^2(\Omega)}-\scpr{b^\top \grad x+cx}{\varphi}_{L^2(\Omega)}.\]
Consequently, $(a\grad x)\in H_{\rm div}(\Omega)$ with 
$\divg a \grad x=z-b^\top \grad x+cx$.
Using Green's identity (Proposition~\ref{eq:hdiv}\eqref{eq:hdiv3}), we thus have for all $x_2\in W^{1,2}(\Omega)$ that
\[\begin{aligned}
&\sum_{k=1}^m\lambda_{k}\scpr{ w_k}{{\cT}_{0}x_2}_{W^{-1/2,2}(\Gamma),W^{1/2,2}(\Gamma)}\\=&\;\fa_0(x,x_2)+\scpr{z}{ x_2}_{L^2(\Omega)}=\scpr{{\cT}_{0,\nu}a\grad x}{{\cT}_{0}x_2}_{W^{-1/2,2}(\Gamma),W^{1/2,2}(\Gamma)}.
\end{aligned}\]
As ${\cT}_{0}:W^{1,2}(\Omega)\to W^{1/2,2}(\Gamma)$ is onto by  Proposition~\ref{eq:hdiv}\eqref{eq:hdiv1}, we obtain that 
\[{\cT}_{0,\nu}a\grad x=\sum_{k=1}^m\lambda_k w_k.\]
Hence, $x\in\cD(\Af)$ with $z=\Af x$ and  $\Bf x=(\lambda_1,\ldots,\lambda_k)$, which completes the proof of the equivalence. The uniqueness follows since $\Af$ is  well-defined.
\item
By (a), $\cD(\Af)\cap \ker\Bf$ equals the space of all $x\in W^{1,2}(\Omega)$ for which there exists some $z\in L^2(\Omega)$ such that $\fa_0(x,x_2)=-\scpr{z}{x_2}_{L^2(\Omega)}$ for all $x_2\in W^{1,2}(\Omega)$. 
For $\K=\C$, we apply  \cite[Thm.~5.9]{Arendtetal} to obtain that Estimate \eqref{eq:parab_sesqest} yields that the sesquilinear form $\fa_0$ is sectorial in the sense of the definition in \cite[p.~310]{Kato80}. 
Then Kato's first representation theorem \cite[Sec.~VI.2, Thm.~2.1]{Kato80} implies that $\cD(\Af)\cap \ker\Bf$
is dense in $W^{1,2}(\Omega)$. The case $\K=\R$ then simply follows by considering $\fa_0$ as a~sesquilinear form on the complex space $W^{1,2}(\Omega)$.
\end{enumerate}
\end{proof}

Next we show that $\syst$ satisfies the requirements of Theorem~\ref{thm:funnel2}.
\begin{theorem}\label{thm:funnel_heat}
Let Assumption~\ref{ass:parab} be valid, let the operators $\Af$, $\Bf$ and $\Cf$  be defined as in \eqref{eq:Afparab},
\eqref{eq:Bfparab} and \eqref{eq:Cfparab}.
Then the~BCS $\syst$ fulfills Assumptions~\ref{ass:system_class}. If additionally, $b=0$ and the real part of $c$ is essentially bounded from above by a~negative constant, then  Assumption~\ref{ass:system_class} is satisfied with $\alpha<0$.
\end{theorem}
\begin{proof}
{\em Step 1:} 
We show that there exists some $\alpha>0$  such that \eqref{eq:diss} is satisfied. Let $x\in\cD(\Af)$. 
Using Lemma~\ref{lem:Aop}(a), we have for all $x\in\cD(\Af)$ that 
\[\begin{aligned}
\re\scpr{\Af x}{x}=&\,-\re\fa_0(x,x)+\sum_{k=1}^m\lambda_k\scpr{w_k}{\cT_0 x}_{W^{-1/2}(\Gamma),W^{1/2}(\Gamma)}\\
=&-\re\fa_0(x,x)+\scpr{\Bf x}{\Cf x}_{\K^m}.
\end{aligned}\]
Now using Estimate \eqref{eq:parab_sesqest}, we obtain that \eqref{eq:diss}  holds with $\alpha=C_2$.\\
{\em Step 2:} We prove that $\alpha$ in \eqref{eq:diss} can be chosen negative, if $b=0$ and $\re(c)$ is essentially bounded from above by some positive constant. This follows analogously as in Step~1 using that, in the case $b=0$, we have that
\[ \re\left(\fa_0(x,x)\right)\geq -\esssup_{\xi\in\Omega}\re(c(\xi))\, \|x\|^2_{L^{2}}\quad\forall x\in W^{1,2}(\Omega).\]
{\em Step 3:} We show that  $\Af|_{\ker\Cf}-\beta I$ maps onto $L^2(\Omega)$ for $\beta>\alpha=C_2$. To this end, let $z\in L^2(\Omega)$.
By using \eqref{eq:parab_sesqest}, we see that the sesquilinear form $\fa_\beta$
is coercive and bounded, whence we can infer from the Lax-Milgram theorem, see e.g.\ \cite[Thm.~6.2]{Alt16}, that there exists a unique $x\in W^{1,2}(\Omega)$ with 
\[    \fa_\beta(x,v)=-\left\langle z,v\right\rangle_{L^2(\Omega)}\quad \forall v\in W^{1,2}(\Omega).\]
Then Lemma~\ref{lem:Aop}(a) implies that $x\in \cD(\Af)$ with $(\Af-\beta I) x=z$ and $\Bf x=0$, i.e., 
$(\left.\Af\right|_{\ker\Bf}-\beta I) x=z$.
By using that the already proven inequality \eqref{eq:diss} holds for some $\alpha\in\R$ and our input and output spaces are finite-dimensional, we can use Proposition~\ref{prop:ass} to conclude that $\cR(\Af|_{\ker\Cf}-\beta I)=L^2(\Omega)$ 
for $\beta\ge\alpha$.\\
{\em Step 4:} We prove that $\left[\begin{smallmatrix}\Bf\\\Cf\end{smallmatrix}\right]$ maps $\cD(\Af)$ onto $\K^{2m}$. This is done in three steps.  \\ 
{\em Step 4a:} We show that $\cR(\Bf)=\K^m$.  Let $u\in \K^{n}$ and $\beta>C_1$, where $C_{1}$ was defined in \eqref{eq:parab_sesqest}.
Again by using the Lax-Milgram theorem we find some $x\in W^{1,2}(\Omega)$ with 
\[\fa_\beta(x,x_2)=\left\langle u,\left(\langle \cT_{0} x_2,w_i \rangle\right)_{i=1,\ldots,m}\right\rangle_{\K^m}\quad \forall x_2\in W^{1,2}(\Omega).\]
Lemma~\ref{lem:Aop}(a) thus yields $x\in\cD(\Af)$, $(\Af-\beta I)x=0$ and $\Bf x=u$, whence $u\in\cR(\Bf)$.\\
{\em Step 4b:} 
We show that $\Cf(\cD(\Af)\cap\ker\Bf)=\K^{m}$. 
Since $w_1,\ldots,w_m\in W^{-1/2,2}(\Gamma)$ are linearly independent, 
the operator 
\[P:W^{1/2,2}(\Gamma)\to \K^{m},w\mapsto(\langle w,w_{i}\rangle)_{i=1,..,m}\]
is bounded and surjective and so is $P\circ \cT_{0}:W^{1,2}(\Omega)\to\K^{m}$, by Proposition~\ref{eq:hdiv}\eqref{eq:hdiv1}. Clearly, $P\circ\cT_{0}$ extends the operator $\Cf$. Since $\cD(\Af)\cap\ker\Bf$ is dense in $W^{1,2}(\Omega)$ by Lemma~\ref{lem:Aop}(b), we conclude that $\Cf(\cD(\Af)\cap\ker\Bf)$ is dense and thus equals $\K^{m}$.\\
{\em Step 4c:} We conclude that $\cR\left(\left[\begin{smallmatrix}\Bf\\\Cf\end{smallmatrix}\right]\right)=\K^{2m}$. Let $u,y\in\K^m$. By Step~4a, there exists $x_1\in\cD(\Af)$ with $u=\Bf x_1$. Further, by Step~4b, we find $x_2\in\cD(\Af)$ with $\Bf x_2=0$ and $\Cf x_2=y-\Cf x_1$. Then $\left[\begin{smallmatrix}\Bf\\\Cf\end{smallmatrix}\right](x_1+x_2)=(\begin{smallmatrix}u\\y\end{smallmatrix})$
and thus $\left(\begin{smallmatrix}u\\y\end{smallmatrix}\right)\in \cR\left(\left[\begin{smallmatrix}\Bf\\\Cf\end{smallmatrix}\right]\right)$.
\end{proof}
{\begin{remark}
\begin{enumerate}[(a)]
\item Funnel control for systems of type \eqref{eq:str_parab} with, additionally, $m=1$, $w_1\equiv 1$, $a\equiv I_m$, $b\equiv0$, $c\equiv0$
and boundary $\Gamma$ being of class $C^2$ has been treated in \cite{ReisSelig}. Our result allows, besides the treatment of less smooth boundary and more general parabolic equations, also to have multiple actuators and sensors. A~typical situation may be functions $w_1,\ldots,w_m$ which are supported at several disjoint parts of the boundary.
\item 
Under the additional assumptions that the boundary $\Gamma$ of $\Omega$ is of class $C^{1,1}$, $w_1,\ldots,w_m\in W^{1/2,2}(\Omega)$ and that 
$a$ is uniformly Lipschitz, it holds that
 \begin{equation}\cD(\Af)=\setdef{x\in W^{2,2}(\Omega)}{\cT_{0,\nu} (a\grad x)\in{\rm span}_{\K}(w_1,\ldots,w_m)}.\label{eq:DAfrule} \end{equation}
Indeed, $\Af-\beta I$ satisfies the assumptions of \cite[Thm.~2.4.7]{Grisvard} for sufficiently large $\beta$.
This implies that the equation $(\Af-\beta I)x=z$ has a unique solution $x\in W^{2,2}(\Omega)$ for any given $z\in L^{2}(\Omega)$. Hence $\cD(\Af)\subset W^{2,2}(\Omega)$ by Lemma~\ref{lem:Aop}(a).
\item Since $w_1,\ldots,w_m\in W^{-1/2,2}(\Gamma)$ are linear independent, there exists some dual system 
$v_1,\ldots,v_m\in W^{1/2,2}(\Gamma)$ with $\langle 
w_i,v_j\rangle=\delta_{ij}$. Then $\Bf$ in 
\eqref{eq:Bfparab} can be reformulated to
\[\Bf x=(\langle \cT_{0} x,v_1\rangle,\ldots,\langle \cT_{0} x,v_m\rangle).\]
\end{enumerate}
\end{remark}
}\color{black}

\section{Proof of Proposition~\ref{prop:ass} and Theorems \ref{thm:funnel1} \& \ref{thm:funnel2}}
\label{sec:proof}

We develop some auxiliary results to facilitate the proofs of the main results.
A~part of following lemma has been shown in \cite{ChengMorris} under the additional assumption of well-posedness, cf.~Remark~\ref{rem:system_class}\eqref{item:wpls}.

{\begin{lemma}\label{lemm:resolvent_gen}
Assume that $\syst$ satisfies {Assumptions~\ref{ass:system_class}\eqref{ass:1}\&\eqref{ass:3}\&\eqref{ass:CBonto} with $\alpha\in\R$.}
	For all $\beta>\alpha$, ${y\in U}$ and $f\in X$ there exist unique $x\in\cD(\Af)$ and {$u\in{U}$} with
	\begin{equation}\label{eq:syst_laplace}
	(\beta I-\Af)x=f,\quad	u=\Bf x,\quad
	y=\Cf x.
	\end{equation}
	Furthermore, there exist operators $H(\beta)\in\cL(X)$, $J(\beta)\in\cL({U},X)$, $F(\beta)\in\cL(X,{U})$ and $G(\beta)\in\cL({U})$, such that $x$, $f$, $u$ and $y$ fulfill \eqref{eq:syst_laplace} if, and only if,
	\begin{equation}\label{eq:system_sol}
	\begin{aligned}
	x&=H(\beta)f+J(\beta)y,\\
	u&=F(\beta)f+G(\beta)y.
	\end{aligned}
	\end{equation}
Further, the operator $G(\beta)$ fulfills  $\re\langle y, G(\beta)y\rangle_U>0$ for all $y\in U\setminus\{0\}$.
\end{lemma}}

\begin{proof}
{{\em Step 1:} Provided that a~solution of \eqref{eq:syst_laplace} exists, we show it is unique.
To this end, it suffices to show that the choice $f=0$ and {$y=0$} leads to $x=0$ and {$u=0$}. Assuming that $x\in\mathcal{D}(\Af)$, $y\in{U}$ fulfills  \eqref{eq:syst_laplace} with $f=0$ and {$y=0$}, we obtain from
\eqref{eq:diss} in Assumption~\ref{ass:system_class}\eqref{ass:passivity} that
	 \[\beta\|x\|_X^2	=\re\langle \Af x, x\rangle_X\leq\re\langle \Bf x,\Cf x\rangle_{{U}}+\alpha\|x\|_X^2,\]
and thus $(\beta-\alpha)\|x\|^2\leq0$. Invoking $\beta>\alpha$, we obtain $x=0$ and, consequently, {$u=\Cf x=0$}.\\}
{{\em Step 2:} We show the existence of bounded operators $H(\beta)$, $J(\beta)$, $F(\beta)$ and $G(\beta)$
such that $x$ and $u$ given by \eqref{eq:system_sol}  satisfy \eqref{eq:syst_laplace}.
By using Remark~\ref{rem:system_class}\eqref{rem:system_class2}, we see that Assumptions~\ref{ass:system_class}\eqref{ass:passivity}\&\eqref{ass:3} imply that $\beta I-\Af|_{\ker\Cf}$ is bijective.
Further invoking Remark~\ref{rem:system_class}\eqref{rem:system_class3}, Assumption~\ref{ass:system_class}\eqref{ass:CBonto} leads to the existence of
$P,Q\in\cL({U},\cD(\Af))$, such that \eqref{eq:onto} holds. Considering
	\begin{align*}
	x&=\underbrace{(\beta I-\Af|_{\ker\Cf})^{-1}}_{=:H(\beta)}f+\underbrace{((\beta I-\Af|_{\ker\Cf})^{-1}(\Af Q-\beta Q)+Q)}_{=:J(\beta)}y,\\
	u&=\underbrace{\Bf(\beta I-\Af|_{\ker\Cf})^{-1}}_{=:F(\beta)}f+\underbrace{\Bf(\beta I-\Af|_{\ker\Cf})^{-1}(\Af Q-\beta Q)}_{=:G(\beta)}y,
	\end{align*}
a~straightforward calculation shows that \eqref{eq:syst_laplace} holds. Further, the operators $H(\beta)$, $J(\beta)$, $F(\beta)$ and $G(\beta)$ are bounded as  compositions of bounded operators. Together with Step 1 this also shows that a quadruple $(x,u,y,f)$ solves \eqref{eq:syst_laplace} if, and only if, \eqref{eq:system_sol} holds. \\
{\em Step 3:} We show that $\re\langle y, G(\beta)y\rangle_U>0$ for all $y\in U\setminus\{0\}$.
Considering (\ref{eq:syst_laplace}) with $f=0$ and taking the real part of the inner product in $X$, we obtain
	$$\re\beta\|x\|^2=\re\langle\Af x,x\rangle\leq\re\langle u,y\rangle_{{U}}+\alpha\|x\|^2=\re\langle G(\beta)y,y\rangle_{{U}}\!+\alpha\|x\|^2,$$
	whence
	$$0\leq(\beta-\alpha)\|x\|^2\leq\re\langle G(\beta)y,y\rangle_{{U}},$$
	so that $G(\beta)$ is accretive. To prove that this inequality is strict for $y\neq0$, assume that $y\in{U}$ fulfills $\re\langle G(\beta)y,y\rangle_{{U}}=0$. Then $(\beta-\alpha)\|x\|^2_X=0$, which gives $x=0$, and thus $y=\Cf x=0$.
}\end{proof}
{Now we are able to present the proof of Proposition~\ref{prop:ass}.
\begin{proof}[Proof of Proposition~\ref{prop:ass}]
Assume that $\mathfrak{S}=(\Af,\Bf,\Cf)$ is a~BCS satisfying Assumptions~\ref{ass:system_class}\eqref{ass:1}\&\eqref{ass:CBonto}.\\
As already stated in Remark~\ref{rem:system_class}~\eqref{rem:system_class1}, the equivalence between (i) and (ii) as well as the one between (iii) and (iv) are direct consequences of the Lumer–Phillips theorem \cite[Thm.~3.15]{EngeNage00}. It remains to prove the equivalence between (i) and (iii) under the additional assumption that $\dim U<\infty$.\\ 
``(i)$\Rightarrow$(iii)'': Assume that $\dim U<\infty$ and that Assumption~\ref{ass:system_class}\eqref{ass:3} holds.
Let $\beta>\alpha$ and $f\in X$. Consider $H(\beta)\in\cL(X)$, $J(\beta)\in\cL({U},X)$, $F(\beta)\in\cL(X,{U})$ and $G(\beta)\in\cL({U})$ as in Lemma~\ref{lemm:resolvent_gen}. Since $\dim U<\infty$ and $\re\langle y, G(\beta)y\rangle_U>0$ for all $y\in U$, $G(\beta)$ has a~bounded inverse. Hence we may define
$x:=(H(\beta)-J(\beta)G(\beta)^{-1}F(\beta))f$. Then for $y:=-G(\beta)^{-1}F(\beta)f$ it holds that
\[	\begin{aligned}
	x&=H(\beta)f+J(\beta)y,\\
	0&=F(\beta)f+G(\beta)y.
	\end{aligned}\]
Therefore Lemma~\ref{lemm:resolvent_gen} implies  in particular that $(\beta I-\Af)x=f$ and $0=\Bf x$, whence $(\Af|_{\ker\Bf}-\beta I)x=f$.\\
 ``(iii)$\Rightarrow$(i)'': If $\dim U<\infty$ and $\cR(\Af|_{\ker\Bf}-\beta I)=X$ for some $\beta\ge\alpha$, then by interchanging the roles of $\Bf$ and $\Cf$, we see that the BCS $(\Af,\Cf,\Bf)$ fulfills Assumption~\ref{ass:system_class}\eqref{ass:1},\eqref{ass:3}\&\eqref{ass:CBonto}. As already proven in ``(i)$\Rightarrow$(iii)'', this implies that $\cR(\Af|_{\ker\Cf}-\beta I)=X$ for some $\beta>\alpha$, which completes the proof.
\end{proof}}



{Let us introduce the following classical notion in the operator-theoretic study of nonlinear evolution equations, see e.g.\ \cite{Miyadera,Showalter}.}
\begin{definition}
	Let $X$ be a Hilbert space and $A:\cD(A)\subset X\rightarrow X$ a (possibly nonlinear) operator. We say that $A$ is \emph{dissipative}, if for all $x,y\in\cD(A)$, the inequality $\re\langle A(x)-A(y),x-y\rangle\leq0$ holds. If, furthermore,
	for all $\lambda>0$, it holds that $\cR(\lambda I-A)=X$, we call $A$ \emph{m-dissipative}.
\end{definition}

\begin{remark}
	If $A:\cD(A)\subset X\to X$ is m-dissipative, then for all $f\in X$ and $\lambda>0$ there exists some $z\in\cD(A)$ with
	$\lambda z-A(z)=f$.
	The element $z$ is indeed unique, since for any $x\in\cD(A)$ with $\lambda x-A(x)=f$, we obtain by taking the difference that
	$$\lambda(x-z)-(A(x)-A(z))=0$$ and taking the inner product with $x-z$ gives
	$$\lambda\|x-z\|^2=\re\langle A(x)-A(z),x-z\rangle.$$
	Dissipativity of $A$ leads to non-positivity of the latter expression, whence $x=z$.
\end{remark}

\begin{lemma}\label{lemma:funnel_m_monotone}
	Let $\phi:B_{1}(0)\subset{U}\rightarrow{U}$ be defined by
	\begin{equation}\label{eq:funnel_function}
	\phi(y):=\dfrac{1}{1-\|y\|^2}y.
	\end{equation}
	Then $-\phi$ is {dissipative.}
\end{lemma}

\begin{proof}
{Consider} the function $g:[0,1)\to\R$ defined by
	$r\mapsto \tfrac{r}{1-r^2}$
is monotonically increasing on $[0,1)$, which follows from nonnegativity of its derivative. As a~consequence $(g(a)-g(b))(a-b)\geq0$ for all $a,b\in[0,1)$. Using this, we obtain that for $w,y\in B_{1}(0)$,
	\begin{align*}
	\re\langle\phi(w)-\phi(y),w-y\rangle&=\re\langle\phi(w),w\rangle+\re\langle\phi(y),y\rangle-\re\langle \phi(y),w\rangle-\re\langle\phi(w),y\rangle\\
	&=\left(\tfrac{\|w\|^2}{1-\|w\|^2}+\tfrac{\|y\|^2}{1-\|y\|^2}-\tfrac{\re\langle w,y\rangle}{1-\|y\|^2}-\tfrac{\re\langle y,w\rangle}{1-\|w\|^2}\right)\\
	&\geq \left(\tfrac{\|w\|^2}{1-\|w\|^2}+\tfrac{\|y\|^2}{1-\|y\|^2}-\tfrac{\|w\|\|y\|}{1-\|y\|^2}-\tfrac{\|y\|\|w\|}{1-\|w\|^2}\right)\\
	&=\left(\tfrac{\|w\|}{1-\|w\|^2}-\tfrac{\|y\|}{1-\|y\|^2}\right)(\|w\|-\|y\|)\\
&= (g(\|w\|)-g(\|y\|))\,(\|w\|-\|y\|)\geq0.
	\end{align*}
\vspace{-1.1cm}

%
%
\end{proof}

%

The next result is a modification of \cite[Thm.~4.3]{AugnerDis} in which the function $\phi$ is defined on the whole space ${U}$ instead of the domain $B_{1}(0)$, as in our situation.

\begin{lemma}\label{lem:A_m_diss}
	Let $\syst$ be a BCS and let Assumption \ref{ass:system_class} be satisfied with $\alpha\in\R$. Let $\phi:B_{1}(0)\subset{U}\rightarrow{U}$ be given by \eqref{eq:funnel_function}. Then the nonlinear operator $\cA:D(\cA)\subset X\to X$ with
	\begin{equation}\label{eq:closed_loop_op}
	\begin{aligned}
	\cA&:=(\Af-\alpha I)|_{\cD(\cA)},\\
	\cD(\cA)&:=\{z\in\cD(\Af)\st\|\Cf z\|<1,\Bf z+\phi(\Cf z)=0\}
	\end{aligned}
	\end{equation}
	is m-dissipative. {If $\ker\Bf\cap\ker\Cf$ is dense, then ${\cD(\cA)}$ is dense in $X$.}
\end{lemma}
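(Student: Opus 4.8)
The plan is to establish the two defining properties of $\cA$ separately: m-dissipativity and the density of its domain. For \textbf{dissipativity}, I would take $z_1,z_2\in\cD(\cA)$ and compute $\re\langle\cA(z_1)-\cA(z_2),z_1-z_2\rangle$. Writing this out via \eqref{eq:closed_loop_op}, this equals $\re\langle(\Af-\alpha I)(z_1-z_2),z_1-z_2\rangle$, which by the impedance passivity \eqref{eq:diss} in Assumption~\ref{ass:system_class}\eqref{ass:passivity} is bounded above by $\re\langle\Bf(z_1-z_2),\Cf(z_1-z_2)\rangle_{\C^m}$. The key point is that on $\cD(\cA)$ we have the boundary relation $\Bf z_i=-\phi(\Cf z_i)$, so the right-hand side becomes $-\re\langle\phi(\Cf z_1)-\phi(\Cf z_2),\Cf z_1-\Cf z_2\rangle_{\C^m}$, which is $\leq 0$ precisely by the dissipativity of $-\phi$ established in Proposition~\ref{prop:funnel_m_monotone}. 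Hence $\cA$ is dissipative.

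For \textbf{m-dissipativity} I must show $\cR(\lambda I-\cA)=X$ for all $\lambda>0$; equivalently, given $f\in X$ and $\lambda>0$, I need $z\in\cD(\cA)$ with $(\lambda+\alpha)z-\Af z=f$, subject to $\Bf z+\phi(\Cf z)=0$ and $\|\Cf z\|<1$. Setting $\beta:=\lambda+\alpha>\alpha$, I would invoke Lemma~\ref{lemm:resolvent_gen} to parametrize all solutions of $(\beta I-\Af)z=f$ by the input $u=\Bf z\in\C^m$, with output $y=\Cf z=F(\beta)f+G(\beta)u$. The boundary condition $u=-\phi(y)$ then reduces the problem to a purely finite-dimensional fixed-point equation in $\C^m$: find $y$ with $\|y\|<1$ and $y=F(\beta)f-G(\beta)\phi(y)$, or equivalently $G(\beta)^{-1}y+\phi(y)=G(\beta)^{-1}F(\beta)f$. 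The natural strategy is to exhibit this as a surjectivity statement for an m-dissipative map on $\C^m$: since $G(\beta)^{-1}+(G(\beta)^*)^{-1}$ is positive definite by Lemma~\ref{lemm:resolvent_gen}, the linear map $y\mapsto G(\beta)^{-1}y$ is dissipative (after symmetrization), and $\phi$ is continuous with m-dissipative negative by Proposition~\ref{prop:funnel_m_monotone}. Combining these via the perturbation result Lemma~\ref{lemm:m_diss_sum} should yield that the finite-dimensional operator $y\mapsto -G(\beta)^{-1}y-\phi(y)$ is m-dissipative on $\cD(\phi)$, delivering the required solvability.

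The \textbf{main obstacle} is the finite-dimensional solvability step, because the two summands are dissipative with respect to potentially different inner products: $\phi$ is dissipative in the standard Euclidean product, whereas the dissipativity of $y\mapsto G(\beta)^{-1}y$ comes from positive definiteness of $G(\beta)^{-1}+(G(\beta)^*)^{-1}$, which is also Euclidean but whose interplay with $\phi$ must be checked carefully. I would verify that the sum is dissipative in the Euclidean product directly, then apply Lemma~\ref{lemm:m_diss_sum} with $A=-\phi(\cdot)$ (m-dissipative, domain $\cD(\phi)$ whose closure is the closed unit ball) and $B(y)=-G(\beta)^{-1}y$ (continuous, everywhere defined, dissipative), concluding that the sum is m-dissipative and hence surjective onto $\C^m$. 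Surjectivity yields a solution $y$ with $\|y\|<1$; setting $u:=-\phi(y)$ and reconstructing $z$ via \eqref{eq:system_sol} produces the desired $z\in\cD(\cA)$, proving $\cR(\lambda I-\cA)=X$.

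Finally, for \textbf{density} of $\cD(\cA)$, I would argue that $\cA$ being m-dissipative already forces $\overline{\cD(\cA)}=X$ in a Hilbert space under mild structure, but more concretely one uses that $\ker\Bf\cap\ker\Cf\subset\cD(\cA)$ is dense by Assumption~\ref{ass:system_class}\eqref{ass:CBonto}: indeed any $z\in\ker\Bf\cap\ker\Cf$ satisfies $\Cf z=0$ (so $\|\Cf z\|=0<1$) and $\Bf z+\phi(\Cf z)=0+\phi(0)=0$, hence lies in $\cD(\cA)$. Since this intersection is dense in $X$, so is $\cD(\cA)$, which completes the proof.
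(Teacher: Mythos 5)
Your dissipativity computation and your density argument coincide with the paper's (the paper translates $\ker\Bf\cap\ker\Cf$ by a point $z_0=Pv+Qe$ with $v=-\phi(e)$, but since $\phi(0)=0$ your simpler observation that $\ker\Bf\cap\ker\Cf\subset\cD(\cA)$ already suffices), and your reduction of the range condition to the finite-dimensional equation $G(\beta)^{-1}y+\phi(y)=G(\beta)^{-1}F(\beta)f$ via Lemma~\ref{lemm:resolvent_gen} is exactly the paper's Step~2. However, the last step of your solvability argument has a genuine gap: you apply Lemma~\ref{lemm:m_diss_sum} with $A=-\phi$ and $B=-G(\beta)^{-1}$ to conclude that $T:=-\phi-G(\beta)^{-1}$ is m-dissipative \emph{and hence surjective onto $\C^m$}. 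That implication is false. M-dissipativity of $T$ means $\cR(\mu I-T)=\C^m$ for every $\mu>0$; it says nothing about $\cR(T)$ itself --- the zero operator on $\C^m$ is m-dissipative but not surjective. Your equation, $\phi(y)+G(\beta)^{-1}y=G(\beta)^{-1}F(\beta)f$, is precisely the excluded case $\mu=0$, so m-dissipativity of $T$ alone does not deliver a solution. (A smaller slip: in passing you also assert that m-dissipativity ``already forces'' a dense domain; for nonlinear operators this is false in general --- think of the negative subdifferential of the indicator function of a closed convex proper subset --- but your concrete argument via $\ker\Bf\cap\ker\Cf$ repairs this.)

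The missing idea is the quantitative margin that the positive definiteness in Lemma~\ref{lemm:resolvent_gen} provides, and it is exactly how the paper closes this hole. Since $G(\beta)^{-1}+(G(\beta)^*)^{-1}$ is positive definite, there is $\delta>0$ such that $G(\beta)^{-1}+(G(\beta)^*)^{-1}-2\delta I$ is still positive (semi)definite, i.e.\ $B_\delta(y):=-G(\beta)^{-1}y+\delta y$ is still dissipative. Applying Lemma~\ref{lemm:m_diss_sum} with $A=-\phi$ and this $B_\delta$ shows that $\Psi:=-\phi-G(\beta)^{-1}+\delta I$ is m-dissipative; evaluating its range condition at the strictly positive value $\mu=\delta$ then gives surjectivity of $\delta I-\Psi=\phi(\cdot)+G(\beta)^{-1}$, which is exactly the map you need to be onto. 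So all the ingredients of your proof are on the table (you even cite the positive definiteness of $G(\beta)^{-1}+(G(\beta)^*)^{-1}$), but without this $\delta$-shift the decisive surjectivity claim does not follow; with it, your argument becomes the paper's proof.
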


\begin{proof}
{\em Step 1:}
	{Since $\ker\Bf\cap\ker\Cf$ is contained in $D(\cA)$, it follows that $\cA$ is densely defined if $\ker\Bf\cap\ker\Cf$  dense in $X$.}\\
{{\em Step 2:} Let $\beta>\alpha$ and let $G(\beta)\in \cL(U)$ be as in Lemma~\ref{lemm:resolvent_gen}. 
We show that the function $\Psi:B_1(0)\to U$ with	
\begin{equation}\Psi(\cdot):= \phi(\cdot)+G(\beta)\label{eq:Psidef}\end{equation}
is onto. 
Let $u\in U$. We aim to show that $u=\Psi(e)$ for some $e\in B_1(0)$. If $u=0$, then this is clearly fulfilled by setting $e=0$. Now assume that $u\neq0$.  The dissipativity of $-G(\beta)$ (which holds by Lemma~\ref{lemm:resolvent_gen}) gives rise to bounded invertibility of $\xi I+G(\beta)$ for all $\xi>0$. Hence, we may consider the
continuous function 
\[\begin{aligned}&p:&[1,\infty)&\,\to\R,\\&&\xi&\,\mapsto\|(\xi I+G(\beta))^{-1}u\|^2-1+\xi^{-1}.\end{aligned}\]
Then $p(1)>0$ and $\lim_{\xi\rightarrow\infty} p(\xi)=-1$. Hence there exists some $\xi^\star\in(1,\infty)$ with $p(\xi^\star)=0$.
Let $e:=(\xi^\star I+G(\beta))^{-1}u$. Then $p(\xi^\star)=0$ implies that $\|e\|^2=1-(\xi^\star)^{-1}$. This gives rise to $e\in B_1(0)$ and
\[\phi(e)=\frac{1}{1-(1-(\xi^\star)^{-1})}\, e=\xi^\star\, e=\xi^\star (\xi^\star I+G(\beta))^{-1}u,\]
which leads to 
\[\begin{aligned}
\Psi(e)=&\,\phi(e)+G(\beta)e\\=&\,\xi^\star (\xi^\star I+G(\beta))^{-1}u+G(\beta) (\xi^\star I+G(\beta))^{-1}u\\
=&\,(\xi^\star I+G(\beta))(\xi^\star I+G(\beta))^{-1}u=u.\end{aligned}
\]
{\em Step 3:} We show that $\lambda I-\cA$ is onto for all $\lambda>0$:\\
	Let $f\in X$. Our aim is to find some $z\in\cD(\cA)$ with $(\lambda I-\cA)(z)=f$, that is, $\|\Cf z\|<1$ and
\begin{equation}\begin{aligned}
	((\lambda+\alpha) I-\Af)z&=f\\
	\Bf z&=-\phi(\Cf z).
\end{aligned}\label{eq:Afrel}\end{equation}
Set $\beta:=\lambda+\alpha>\alpha$ and consider the operators $H(\beta)\in\cL(X)$, $J(\beta)\in\cL({U},X)$, $F(\beta)\in\cL(X,{U})$ and $G(\beta)\in\cL({U})$ from Lemma~\ref{lemm:resolvent_gen}.
Then $\Psi$ as in \eqref{eq:Psidef} is onto by Step~2, whence there exists some $e\in  U$ with $\|e\|<1$ and $\Psi(e)=-F(\beta)f$. The latter is equivalent to
\[-\phi(e)=F(\beta)f+G(\beta)e,\]
and Lemma~\ref{lemm:resolvent_gen} implies that $z=H(\beta)f+J(\beta)e$
indeed fulfills \eqref{eq:Afrel} together with $\|\Cf z\|<1$.} \\
{\em Step~4:} We show that $\cA$ is dissipative: Let $z_1,z_2\in\cD(\cA)$, then
	\[\begin{array}{rcl}
	\re\langle\cA(z_1)-\cA(z_2),z_1-z_2\rangle_X
&=&
\re\langle(\Af-\alpha I) z_1-(\Af-\alpha I) z_2,z_1-z_2\rangle_X\\[3mm]
	&\underset{\ref{ass:system_class}\eqref{ass:passivity}}{\overset{\rm Assumpt.}{\leq}}&-\re\langle\phi(\Cf z_1)-\phi(\Cf z_2),\Cf z_1- \Cf z_2\rangle_{U}\\[2mm]
	&\underset{\ref{lemma:funnel_m_monotone}}{\overset{\rm Lemma}{\leq}} &0.\\[-6mm]
	\end{array}\]
\end{proof}
%

An intrinsic technical problem when investigating solvability of  \eqref{eq:funnel_problem} is that the feedback is varying in time, i.e.\ it depends on $t$ explicitly.
To circumvent this problem, we perform a change of variables leading to an evolution equation with a constant operator. This is the content of the subsequent auxiliary result.
{
\begin{proposition}\label{lem:transf}
Let $\syst$ be a~BCS satisfying Assumption~\ref{ass:system_class} for some $\alpha\in\R$ and assume that the initial value $x_0$ and the functions $\yref$, $p$, $\varphi$ fulfill Assumption~\ref{ass:contr}\tb{, and let the operators $P,Q\in\cL({U},\cD(\Af))$ be as in \eqref{eq:onto}. Then for the m-dissipative operator $\cA$ given in (\ref{eq:closed_loop_op}), $$\varphi_0:=\varphi(0),\quad e_0:=\Cf x_0-\yref(0),\quad u_0:=\Bf x_0,\quad 	\psi(\varphi,e):=\dfrac{1}{1-\varphi^2\,\|e\|^2}\, e,$$ and
 	\begin{equation}\label{eq:data}
	\left.\begin{aligned}
	\omega:={}&\tfrac{\dot{\varphi}}{\varphi},\\
	f:={}&\varphi \,\Big(\Af Q\yref-Q\dot{y}_{\rm ref}+\Af P(u_0+\psi(\varphi_0,e_0))p -P(u_0+\psi(\varphi_0,e_0))\dot{p}\Big),\\
	z_0:={}&\varphi_0\,\Big(x_0-Q\yref(0)-P(u_0+\psi(\varphi_0,e_0))\Big) \in\mathcal{D}(\cA)
	\end{aligned}\right\}
	\end{equation}
it holds that
\[\omega\in W^{1,\infty}([0,\infty)),\quad f\in  W^{1,\infty}([0,\infty);X),\quad z_0\in\mathcal{D}(\cA).\]
Furthermore, the following statements are valid for $T>0$:}
\begin{enumerate}[(a)]
\item\label{item:xuniq}
If $x\in W^{1,\infty}([0,T];X)$ is such that both $x(t)\in\mathcal{D}(\Af)$ and \eqref{eq:funnel_problem} hold for almost every $t\in[0,T]$, then for
	\begin{equation}\label{eq:change2}
z(t):=\varphi(t) \Big(x(t)-Q \yref(t)-P(\Bf x_{0}+\psi(\varphi_0,e_0))p(t)\Big),
	\end{equation}
 it holds that $z\in W^{1,\infty}([0,T];X)$ and both $z(t)\in\mathcal{D}(\mathcal{A})$ and
	\begin{equation}\label{eq:funnel_equation}
	\begin{aligned}
	\dot{z}(t)&=\cA (z(t))+(\omega(t)+\alpha)z(t)+f(t),\\
	z(0)&=z_0 .
	\end{aligned}
	\end{equation}
	holds for almost every $t\in[0,T]$. If additionally,  $\dot{x}$ is continuous for almost every $t\in[0,T]$, then $\dot{z}$ is continuous for almost every $t\in[0,T]$.
\item\label{item:zuniq} Conversely, if $z\in W^{1,\infty}([0,T];X)$ is such that both $z(t)\in\mathcal{D}(\cA)$ and  \eqref{eq:funnel_equation} holds for almost every $t\in[0,T]$,
then for
	\begin{equation}\label{eq:change}
x(t)=\varphi(t)^{-1}z(t)+Q \yref(t)+P(\Bf x_{0}+\psi(\varphi_0,e_0))p(t),
	\end{equation}
we have that $x\in W^{1,\infty}([0,T];X)$ and both $x(t)\in\mathcal{D}(\Af)$ and \eqref{eq:funnel_problem} hold for almost every $t\in [0,T]$.
If additionally  $\dot{z}$ is continuous for almost every $t\in[0,T]$, then $\dot{x}$ is continuous for almost every $t\in[0,T]$.
\item\label{item:z3} If $z\in W^{1,\infty}([0,\infty);X)$, then $x$ as in \eqref{eq:change} fulfills $x\in W^{1,\infty}([0,\infty);X)$.
\end{enumerate}
\end{proposition}
}
\begin{proof}
The statements $\omega\in W^{1,\infty}([0,\infty))$, $f\in W^{1,\infty}([0,\infty);X)$ follow from the product rule for weak derivatives \cite[p.~124]{Alt16}. Since $P$ maps to $D(\Af)$, we have $z_0\in\mathcal{D}(\Af)$. Further, by using $\Bf P=\Cf Q=I$, $\Bf Q=\Cf P=0$ {and Assumption \ref{ass:contr}, we obtain that $\|\Cf z_{0}\|=\varphi(0)\|\Cf x_{0}-y_{\mathrm{ref}}(0)\|<1$ and }
\[\phi(\Cf z_0)=\dfrac{\varphi_0\, e_0}{1-\varphi_0^2\,\|e_0\|^2}=-\Bf z_0,
\]
whence $z_0\in\mathcal{D}(\cA)$.\\
To prove \eqref{item:xuniq}, assume that $x\in W^{1,\infty}([0,T];X)$ has a derivative which is continuous and in the domain of $\Af$ almost everywhere. First note that the twice weak differentiability of $p$ and $\varphi$ together with the fact that $P$ and $Q$ map to $D(\Af)$ implies that $z\in W^{1,\infty}([0,T];X)$ with ${z}(t)$ being in $D(\Af)$ for almost every $t\in[0,T]$. By further using that \eqref{eq:funnel_problem} holds for almost every $t\in[0,T]$, we obtain ---analogously to the above computations for $z_0$--- that
\[\phi(\Cf z(t))=\dfrac{\varphi(t) \, e(t)}{1-\varphi(t)^2\| e(t)\|^2}=-\Bf z(t),
\]
which implies that $z(t)\in\mathcal{D}(\cA)$ for almost every $t\in[0,T]$. Further, a~straightforward calculation shows that \eqref{eq:funnel_problem} implies that $z(t)$ fulfills
\eqref{eq:funnel_equation}.\\
Statement \eqref{item:zuniq} follows by an argumentation straightforward to that in the proof of \eqref{item:xuniq}. Statement \eqref{item:z3} is a simple consequence of
$\inf_{t\ge0}\varphi(t)>0$, $\varphi,p\in W^{2,\infty}([0,\infty))$, $\yref\in W^{2,\infty}([0,\infty),{U})$ and the product rule for weak derivatives.
\end{proof}

The previous proposition is indeed the key step to prove Theorems~\ref{thm:funnel1} \& \ref{thm:funnel2} on the feasibility of the funnel controller. By using the state transformation \eqref{eq:change2} with inversion \eqref{eq:change}, the analysis of feasibility of the funnel controller reduces to the proof of existence of a solution to the nonlinear evolution equation \eqref{eq:funnel_equation} in which the time-dependence is now extracted to the inhomogeneity. This is subject of the following result, which is a slight generalization of \cite[Thm.~IV.4.1]{Showalter}, where equations of type \eqref{eq:funnel_equation} with constant $\omega$ and m-monotone $\cA$ are considered. {For that we will use Kato's results \cite[Thms.~1-3 and the the subsequent remark]{Kato}.
Note as well, that in \cite{Kato} the notion of m-monotonic operators $A$ is used, which means that $-A$ is m-dissipative.}

\begin{lemma}\label{lem:gen_kato}
	Let $T>0$, $X$ be a Hilbert space and $A:\cD(A)\subset X\to X$ be m-dissipative in $X$ with $0\in\cD(A)$ and $A(0)=0$.
		Then for each $z_0\in\cD(A)$, real-valued $\omega\in W^{1,\infty}([0,T])$ and $f\in W^{1,\infty}([0,T];X)$ there exists a unique $z\in W^{1,\infty}([0,T];X)$ with
	\begin{enumerate}[(i)]
		\item $z(t)\in\cD(A)$ for almost every $t\in[0,T]$;
		\item for almost every $t\in[0,T]$ it holds that
		\begin{align}\label{eq:abstractCP}
		\dot{z}(t)&=A(z(t))+\omega(t)z(t)+f(t),\quad t\in[0,T],\\
		z(0)&=z_0,\nonumber
		\end{align}
		\item $\dot{z}$ and $A(z)$ are continuous except at a countable number of values in $[0,T]$.
	\end{enumerate}
\end{lemma}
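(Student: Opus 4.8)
The plan is to absorb the time-dependent lower-order terms $\omega(t)z$ and $f(t)$ into a single time-dependent nonlinear operator family and then invoke Kato's theory of evolution equations governed by (quasi-)m-accretive families \cite{Kato}. Since \cite{Kato} works in the accretive convention, I would set, for $t\in[0,T]$ and $x\in\cD(A)$,
$$B(t)(x):=-A(x)-\omega(t)\,x-f(t),\qquad \cD(B(t)):=\cD(A),$$
so that the Cauchy problem \eqref{eq:abstractCP} is equivalent to $\dot z(t)+B(t)(z(t))=0$, $z(0)=z_0$. The whole task then reduces to checking that $\{B(t)\}_{t\in[0,T]}$ satisfies the hypotheses under which Kato's theorems, extended from the m-accretive to the uniformly quasi-m-accretive, inhomogeneous, Lipschitz-in-time situation as indicated after \cite[Thms.~1--3]{Kato}, yield a unique strong solution.

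First I would verify uniform quasi-m-accretivity. Put $\Omega:=\|\omega\|_{L^\infty([0,T])}$. For $x,y\in\cD(A)$ and any $t$, dissipativity of $A$ gives
$$\re\langle (B(t)+\Omega I)(x)-(B(t)+\Omega I)(y),x-y\rangle=-\re\langle A(x)-A(y),x-y\rangle+(\Omega-\omega(t))\|x-y\|^2\ge0,$$
since $\Omega-\omega(t)\ge0$ and the constant term $f(t)$ cancels in the difference. For the range condition, for $\lambda>0$ solving $(\lambda+\Omega)x+B(t)(x)=g$ amounts to $\big((\lambda+\Omega-\omega(t))I-A\big)(x)=g+f(t)$, which is solvable because $\lambda+\Omega-\omega(t)>0$ and $A$ is m-dissipative. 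Hence each $B(t)+\Omega I$ is m-accretive on the common domain $\cD(A)$, i.e.\ the family is uniformly quasi-m-accretive.

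Next I would record the $t$-dependence and the compatibility of the data. Because $\omega,f\in W^{1,\infty}$ are Lipschitz, for $x\in\cD(A)$ one has
$$\|B(t)(x)-B(s)(x)\|\le |\omega(t)-\omega(s)|\,\|x\|+\|f(t)-f(s)\|\le\big(\|\dot\omega\|_{L^\infty}\|x\|+\|\dot f\|_{L^\infty}\big)\,|t-s|,$$
which is exactly the Lipschitz-in-time bound Kato requires; moreover the hypotheses $A(0)=0$ and $0\in\cD(A)$ give $B(t)(0)=-f(t)$, bounded uniformly in $t$, supplying the needed reference-point bound. Together with $z_0\in\cD(A)$ and $f\in W^{1,\infty}\subset W^{1,1}$, all hypotheses are met, and Kato's result produces a unique $z\in W^{1,\infty}([0,T];X)$ (the Lipschitz data yield $\dot z\in L^\infty$) satisfying (i) and (ii), with $\dot z$ and $A(z)$ having at most countably many discontinuities, which is assertion (iii). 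Uniqueness I would in any case read off directly from quasi-dissipativity: for two solutions $z_1,z_2$ the map $t\mapsto\|z_1(t)-z_2(t)\|^2$ is differentiable a.e.\ with
$$\ddt\|z_1-z_2\|^2=2\re\langle A(z_1)-A(z_2),z_1-z_2\rangle+2\omega(t)\|z_1-z_2\|^2\le2\Omega\,\|z_1-z_2\|^2,$$
so Gronwall's inequality with $z_1(0)=z_2(0)$ forces $z_1\equiv z_2$. The main obstacle is not any single computation but the bookkeeping of Kato's framework: one must ensure that his theorems, stated for the more special (m-accretive, and partly autonomous/homogeneous) situation, genuinely extend to the uniformly quasi-m-accretive, Lipschitz-in-time, inhomogeneous family $\{B(t)\}$ constructed above, and that the asserted regularity (the countable discontinuity set and the $W^{1,\infty}$ bound rather than merely $W^{1,1}$) survives this extension. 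Confirming that the extension remarked after \cite[Thms.~1--3]{Kato} covers precisely this family is the crux of the argument.
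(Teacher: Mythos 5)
Your proposal follows essentially the same route as the paper's proof: both absorb the lower-order terms into a time-dependent operator family on the fixed domain $\cD(A)$ (yours written in the accretive sign convention, the paper's as $\cA(t)z=A(z)+\omega(t)z+f(t)$), verify that shifting by $\|\omega\|_{L^\infty}$ yields a uniformly (quasi-)m-accretive/m-dissipative family --- with the range condition reduced to m-dissipativity of $A$ exactly as in the paper --- establish the identical Lipschitz-in-time estimate from $\omega,f\in W^{1,\infty}$, and then invoke \cite[Theorems 1--3]{Kato} together with the subsequent remark there. Your added Gronwall-based uniqueness argument is a correct but inessential supplement; otherwise the two arguments coincide.
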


\begin{proof}
	Define the operator $\cA(t)z:=A(z)+\omega(t)z+f(t)$ for $(t,z)\in[0,T]\times\cD(A)$ with $\cD(\cA(t))=\cD(A)$ for all $t\in[0,T]$.
	{The assertions immediately follow from  \cite[Thms.~1-3]{Kato} and the subsequent remark in \cite{Kato} provided that the following two conditions are satisfied for $\cA(t)$:
	\begin{enumerate}
	\item there exists $\lambda>0$ such that $-\lambda I+\cA(t)$ is m-dissipative for all $t\in[0,T]$, and
	\item there exists $C>0$ such that for all $\mathcal{D}(\cA)$ and  $s,t\in[0,T]$,
	\begin{equation}\label{eq:katoproof}
	\|\cA(t)z-\cA(s)z\|\leq C|t-s|(1+\|z\|).
	\end{equation}
	\end{enumerate}
	Let $\lambda>\|\omega\|_{L^\infty}$. Then the dissipativity in the first condition is trivial. To show the range condition, let $\mu>0$ and $u\in X$. Then
	\[\mu z- (-\lambda z+\cA(t)z)=u\]
	can be rewritten as
	\[(\mu+\lambda-\omega(t))z-A(z)=u+f(t).\]
	Since $\lambda-\omega(t)>0$ uniformly in $t$ and $A$ is m-dissipative, it follows that there is a unique $z\in\cD(A)$ such that $\mu z-(-\lambda z+\cA(t)z)=u$. \\
	The inequality \eqref{eq:katoproof} simply follows from
	\begin{align*}
		\|\cA(t)z-\cA(s)z\|&=\|(\omega(t)-\omega(s))z+f(t)-f(s)\|\\
		&\leq C\,(\|\dot{\omega}\|_{L^\infty}\|z\|+\|\dot{f}\|_{L^\infty})|t-s|
	\end{align*}
	with $C=\max\{\|\dot{\omega}\|_{L^\infty},\|\dot{f}\|_{L^\infty}\}$.
	}
\end{proof}

\begin{proof}[Proof of Theorem \ref{thm:funnel1}]
Let $T>0$, and consider the nonlinear operator $\cA$ as in \eqref{eq:closed_loop_op} and $\omega\in W^{1,\infty}([0,\infty))$, $f\in W^{1,\infty}([0,\infty);X)$ and $z_0\in\mathcal{D}(\cA)$ as in~\eqref{eq:data}. Then Lemma~\ref{lem:gen_kato} implies that the nonlinear evolution equation \eqref{eq:funnel_equation} has a~unique solution $z\in W^{1,\infty}([0,T];X)$ in the sense that for almost all $t\in[0,T]$ it holds that $z(t)\in\mathcal{D}(\cA)$, $\dot{z}$ is continuous at $t$, and \eqref{eq:funnel_equation} is satisfied. Then
Proposition~\ref{lem:transf}\ref{item:zuniq}) yields that $x\in W^{1,\infty}([0,T];X)$ as in \eqref{eq:change} has the desired properties.\\
It remains to show uniqueness: Assume that $x_i\in W^{1,\infty}([0,T];X)$ are solutions of the closed-loop system \eqref{eq:funnel_problem} for $i=1,2$. Then
\begin{equation}z_i(t)=\varphi(t) \Big(x_i(t)-Q \yref(t)-P(\Bf x_{0}+\psi(\varphi_0,e_0))p(t)\Big),
\label{eq:xizi}\end{equation}
fulfills $\dot{z}_i(t)=\cA (z_i(t))+(\omega(t)+\alpha)z(t)+f(t)$ with $z_i(0)=z_0$, and the uniqueness statement in Lemma~\ref{lem:gen_kato} gives $z_1=z_2$. Now resolving \eqref{eq:xizi} for $x_i$ and invoking $z_1=z_2$ gives $x_1=x_2$.\\
{To see that the set $M=\{x_{0}\in D(\Af)\:\colon\:\varphi(0)\|\Cf x_{0}-y_{\mathrm{ref}}(0)\|<1\}$ is dense in $X$, let $\tilde{x}\in D(\Af)$ be such that $\Cf \tilde{x}=\yref(0)$, which exists by Assumption \ref{ass:system_class}\eqref{ass:CBonto}. Since $\ker \Cf$ is dense in $X$ as a consequence of Assumption \ref{ass:system_class}\eqref{ass:1}\&\eqref{ass:3}, see also Remark \ref{rem:system_class}\eqref{rem:system_class1}, it follows that  $\tilde{x}+\ker\Cf\subset M$ is dense.}
\end{proof}
It remains to prove  Theorem~\ref{thm:funnel2} which states that the global solution and its derivative are bounded in case of negativity of the constant $\alpha$ in Assumption~\ref{ass:system_class}~\eqref{ass:passivity}. To this end we employ one~further auxiliary result, which is a variant of Gr\"onwall's inequality.

\begin{lemma}{\cite[Lem.~IV.4.1]{Showalter}}\label{lem:groenwall}
	Let $a,b\in L^1([0,T])$ be real-valued with $b\geq0$ almost everywhere and let the absolutely continuous function $v:[0,T]\to(0,\infty)$ satisfy
	$$(1-\rho)\dot{v}(t)\leq a(t)v(t)+b(t)v(t)^\rho\quad\mbox{ for almost every }t\in[0,T],$$
	where $0\leq\rho<1$. Then
	$$v(t)^{1-\rho}\leq v(0)^{1-\rho}\, e^{\int_0^ta(s)\ds{s}}+\int_0^te^{\int_s^t a(r)\ds{r}}b(s)\ds{s}\quad\forall\, t\in[0,T].$$
\end{lemma}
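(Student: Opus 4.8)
The plan is to reduce this nonlinear (Bernoulli-type) differential inequality to the classical linear Gr\"onwall inequality by means of the substitution $w:=v^{1-\rho}$. Since $v$ is continuous on the compact interval $[0,T]$ and takes values in $(0,\infty)$, it attains a strictly positive minimum, so there is some $c>0$ with $v(t)\ge c$ for all $t\in[0,T]$. On the compact range $[c,\|v\|_{\infty}]$ the map $s\mapsto s^{1-\rho}$ is Lipschitz, its derivative $(1-\rho)s^{-\rho}$ being bounded there; hence $w=v^{1-\rho}$ is again absolutely continuous, and the chain rule for absolutely continuous functions yields $\dot w=(1-\rho)v^{-\rho}\dot v$ for almost every $t\in[0,T]$. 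Dividing the hypothesis by $v(t)^{\rho}>0$ then converts it into the \emph{linear} differential inequality $\dot w(t)\le a(t)w(t)+b(t)$, valid for almost every $t\in[0,T]$.

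Next I would introduce the integrating factor $\mu(t):=e^{-\int_0^t a(s)\,ds}$, which is absolutely continuous because $a\in L^1([0,T])$, with $\dot\mu=-a\mu$ almost everywhere. Since $\mu$ and $w$ are both absolutely continuous, so is their product, and the Leibniz rule gives $\tfrac{\mathrm d}{\mathrm dt}(\mu w)=\mu(\dot w-aw)\le \mu b$ almost everywhere. Integrating this from $0$ to $t$, using $\mu(0)=1$ and noting that $\mu b$ is integrable (as $\mu$ is continuous, hence bounded, and $b\in L^1$), yields $\mu(t)w(t)\le w(0)+\int_0^t\mu(s)b(s)\,ds$. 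Multiplying through by $\mu(t)^{-1}=e^{\int_0^t a(r)\,dr}$ and simplifying the exponents via the identity $\int_0^t a-\int_0^s a=\int_s^t a$ recovers exactly the asserted bound, once one recalls that $w=v^{1-\rho}$ and $w(0)=v(0)^{1-\rho}$.

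The only genuine obstacle is the very first step, namely justifying that $w=v^{1-\rho}$ is absolutely continuous so that the chain rule may be applied; because $v$ is assumed only to map into $(0,\infty)$ and not to be uniformly bounded below a priori, the essential point is to exploit the continuity of $v$ together with compactness of $[0,T]$ to produce the uniform lower bound $v\ge c>0$ (and thereby the local Lipschitz estimate that is otherwise destroyed by the singularity of $s\mapsto s^{1-\rho}$ at the origin). Once this is secured, the remaining integrating-factor manipulation is entirely routine and poses no difficulty.
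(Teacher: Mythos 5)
Your proof is correct. Note that the paper itself offers no proof of this lemma---it is quoted directly from \cite[Lemma IV.4.1]{Showalter} and used as a black box---so the only comparison available is with the standard argument, and yours is precisely that: the Bernoulli substitution $w=v^{1-\rho}$ (legitimized by the uniform lower bound $v\ge c>0$, obtained from continuity of $v$ and compactness of $[0,T]$, so that $s\mapsto s^{1-\rho}$ is Lipschitz on the range of $v$ and the chain rule for absolutely continuous functions applies), followed by the integrating factor $e^{-\int_0^t a(s)\,\mathrm{d}s}$ and the fundamental theorem of calculus for absolutely continuous functions. A minor observation: your argument never invokes the hypothesis $b\ge 0$; since the integrating factor is strictly positive, every step goes through without it, so your proof in fact establishes a marginally more general statement than the one quoted.
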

%
Now we are ready to formulate the proof of Theorem \ref{thm:funnel2}.
\begin{proof}[Proof of Theorem \ref{thm:funnel2}]
Let $\alpha<0$, let $\cA$ be the nonlinear operator in \eqref{eq:closed_loop_op} and $\omega\in W^{1,\infty}([0,\infty))$, $f\in W^{1,\infty}([0,\infty);X)$ and $z_0\in\mathcal{D}(\cA)$ be as in~\eqref{eq:data}. \\
{\em Step 1:}  We show that the solution
$z\in W^{1,\infty}_{\rm loc}([0,\infty);X)$ of \eqref{eq:funnel_equation} (which exists by Lemma~\ref{lem:gen_kato}) is bounded:\\
We estimate that for almost all $t\geq 0$,
	\begin{align*}
	\tfrac{1}{2}\ddt\|z(t)\|_X^2\!\!\!\!\!\!\!\!\!\!\!\!&\;\;\;\;\;\;\;=\re \langle z(t), \dot{z}(t)\rangle_X\\
=&\,\re \langle z(t), \cA z(t)+(\omega(t)+\alpha)z(t)+f(t)\rangle_X\\
\leq &\,\re \langle z(t), \cA z(t)\rangle_X+(\omega(t)+\alpha) \|z(t)\|_X^2+\|z(t)\|_X\|f(t)\|_X\\
\stackrel{\eqref{eq:closed_loop_op}}{=}&\,\re \langle z(t), \Af z(t)\rangle_X+\omega(t) \|z(t)\|_X^2+\|z(t)\|_X\|f(t)\|_X\\
\stackrel{\eqref{eq:diss}}{\leq}&\,\re \langle \Bf z(t), \Cf z(t)\rangle_{{U}}+\alpha\|z(t)\|_X^2+\omega(t)\|z(t)\|_X^2+\|z(t)\|_X\|f(t)\|_X\\
\stackrel{\eqref{eq:closed_loop_op}}{=}&\,-\frac{\|\Cf z(t)\|^2}{1-\|\Cf z(t)\|^2}+\alpha\|z(t)\|_X^2+\omega(t)\|z(t)\|_X^2+\|z(t)\|_X\|f(t)\|_X\\
	\leq&\,\alpha\|z(t)\|_X^2+\omega(t)\|z(t)\|_X^2+\|z(t)\|_X\|f(t)\|_X.
	\end{align*}
	Now applying Lemma \ref{lem:groenwall} with $\rho=1/2$, using that the definition of $\omega$ in \eqref{eq:data}, leads to $\omega=\ddt \log(\varphi)$ and, by setting $\varepsilon:=\inf_{t\geq0}\varphi(t)>0$, we obtain that, for all $t\ge0$,
	$$\|z(t)\|_X\leq\varepsilon^{-1}\|z_0\|_X\varphi(t)e^{\alpha t}+\varphi(t)e^{\alpha t}\int_0^t{\varphi(s)^{-1}\, e^{-\alpha s}\|f(s)\|_X\ds{s}}.$$
	By the definition of $f$ in \eqref{eq:data}, there exist $c_0,c_1>0$ such that, for almost all $t\geq 0$, we have that $\|f(t)\|_X\leq \varphi(t)(c_0+c_1\|\yref\|_{W^{1,\infty}})$. Thus,
	$$\|z(t)\|_X\leq\varepsilon^{-1}\|z_0\|_X\varphi(t)e^{\alpha t}-\alpha^{-1}\varphi(t)(c_0+c_1\|\yref\|_{W^{1,\infty}})(1-e^{\alpha t}),$$
	whence $z\in L^\infty([0,\infty);X)$.\\
{\em Step 2:}  We show that $\dot{z}\in L^\infty([0,\infty);X)$:\\
To this end, let $h>0$ and, by using the dissipativity of $\cA$, consider
	\begin{align*}
	\tfrac{1}{2}\ddt\|z(t+h)-z(t)\|_X^2\leq&\;\alpha\|z(t+h)-z(t)\|_X^2+\omega_0(t+h)\|z(t+h)-z(t)\|_X^2\\
	&+|\omega_0(t+h)-\omega_0(t)|\|z(t)\|_X\|z(t+h)-z(t)\|_X\\
	&+\|f(t+h)-f(t)\|_X\|z(t+h)-z(t)\|_X.
	\end{align*}
	Again applying the Gr\"onwall type inequality from Lemma \ref{lem:groenwall} with $\rho=1/2$, dividing by $h$ and letting $h\rightarrow0$ yields
	\begin{align*}
	&\|\dot{z}(t)\|_X\\\leq&\ \varepsilon^{-1}\|\dot{z}(0)\|_X\varphi(t)e^{\alpha t}+\varphi(t)e^{\alpha t}\int_0^t{e^{-\alpha s}\varphi(s)^{-1}(\|z\|_{L^\infty} |\dot{\omega_0}(s)|+\|\dot{f}(s)\|_X)\ds{s}}\\
	\leq&\ \varepsilon^{-1}\|\cA(z_0)+\omega_0(0)z_0+f(0)\|_X\|\varphi\|_{L^\infty}\\
	&+\|\varphi\|_{L^\infty}\left\|\varphi^{-1}\right\|_{L^\infty}(\|z\|_{L^\infty}\|\dot{\omega_0}\|_{L^\infty}+d_0+d_1\|\yref\|_{W^{2,\infty}})
	\end{align*}
	for some $d_0,d_1>0$. Hence, $\dot{z}(t)\in L^\infty([0,\infty);X)$.\\
{\em Step 3:} We conclude that the solution $x$ in \eqref{eq:funnel_problem} (which exists by Theorem \ref{thm:funnel1}) fulfills $x\in W^{1,\infty}([0,\infty);X)$:\\
We know from the first two steps that $z\in W^{1,\infty}([0,\infty);X)$. Then Proposition~\ref{lem:transf}\eqref{item:z3} leads to $x\in W^{1,\infty}([0,\infty);X)$.\\
{\em Step 4:} We finally show that $u=\Bf x$ fulfills $u\in L^{\infty}([0,\infty);{U})$:\\
We know from the third step, we know that $x\in W^{1,\infty}([0,\infty);X)$. Since we have $x(t)\in\mathcal{D}(\Af)$ with $\dot{x}(t)=\Af x(t)$ for almost all $t\ge0$, we can conclude that $\Af x\in L^{\infty}([0,\infty);X)$, and thus $x\in L^{\infty}([0,\infty);D(\Af))$. Then $\Bf\in\cL(D(\Af),{U})$ gives $u=\Bf x\in L^{\infty}([0,\infty);{U})$.
\end{proof}


\section{Simulations}
\label{sec:sim}
Here we show some examples which correspond to the classes mentioned in Section \ref{sec:main}. The implementation of all simulations has been done with Python.

\subsection{Lossy transmission line}
We consider the dissipative version of the Telegrapher's Equation with constant coefficients given by
\begin{align*}
\tfrac{\partial V}{\partial \zeta}(\zeta,t)&=-L\tfrac{\partial I}{\partial t}(\zeta,t)-RI(\zeta,t),\\
	\tfrac{\partial I}{\partial \zeta}(\zeta,t)&=-C\tfrac{\partial V}{\partial t}(\zeta,t)-GV(\zeta,t),\\
u(t)&=\begin{pmatrix}
V(a,t)\\
V(b,t)
\end{pmatrix},\qquad
y(t)=\begin{pmatrix}
I(a,t)\\
-I(b,t)
\end{pmatrix}.
\end{align*}
$R$ is the resistance, $C$ the capacitance, $L$ the inductance and $G$ the conductance ---all of them per unit length.

The system can be written in port-Hamiltonian form as
\begin{align*}
\tfrac{\partial x}{\partial t}(\zeta,t)&=P_1\tfrac{\partial}{\partial \zeta}(\cH(\zeta)x(\zeta,t))+P_0\cH(\zeta)x(\zeta,t),\\
u(t)&=W_BR_0\begin{pmatrix}
(\cH x)(b,t)\\
(\cH x)(a,t)
\end{pmatrix},\qquad
y(t)=W_CR_0\begin{pmatrix}
(\cH x)(b,t)\\
(\cH x)(a,t)
\end{pmatrix},
\end{align*}
where
\begin{align*}
&P_1:=\begin{bmatrix}
0 & -1\\
-1 & 0
\end{bmatrix},\;\;P_0:=\begin{bmatrix}
-R & 0\\
0 & -G
\end{bmatrix},\;\;\cH(\zeta):=\begin{bmatrix}
L^{-1} & 0\\
0 & C^{-1}
\end{bmatrix},\\
\\
&W_B:=\dfrac{1}{\sqrt{2}}\begin{bmatrix}
1 & 0 & 0 & 1\\
-1 & 0 & 0 & 1
\end{bmatrix},\;\; W_C:=\dfrac{1}{\sqrt{2}}\begin{bmatrix}
0 & 1 &  1 & 0\\
0 & 1 & -1 & 0
\end{bmatrix},\\
\\
&x(\zeta,t):=\begin{pmatrix}
LI(\zeta,t)\\
CV(\zeta,t)
\end{pmatrix}.
\end{align*}
We have chosen the reference signals and funnel boundary of the following form
\begin{align*}
\yref(t)&=\begin{pmatrix}
A_1\sin(\omega_1t)\sin(\omega_2t)\\
A_2\sin(\omega_3t)
\end{pmatrix},\\
\varphi(t)&=\varphi_0\varepsilon^{-2}\tanh(\omega t+\varepsilon).
\end{align*}
In this case the system is impedance passive and $P_0+P_0^*\leq-2\min\{R,G\} I_2$ and Theorem \ref{thm:funnel_phs} implies that $u,y\in L^\infty([0,\infty);\R^2)$. The simulated system is shown in Fig.~\ref{fig:1}.

\begin{figure}
	\centering
	\includegraphics[width=300pt]{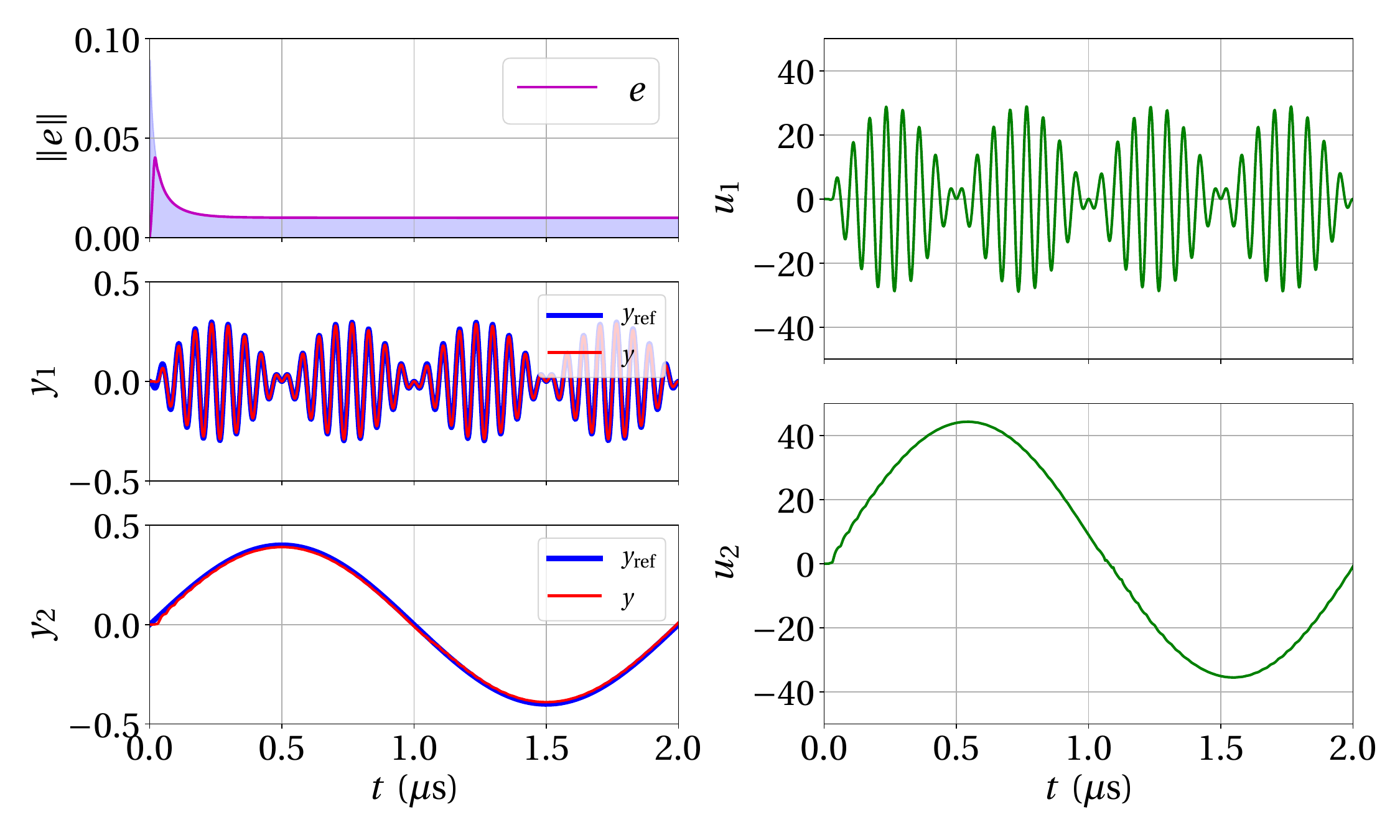}
	\caption{Left: Norm of the error within the funnel boundary followed by the two reference signals and the respective outputs. Right: Inputs obtained from the feedback law.}
	\label{fig:1}
\end{figure}

The parameter values are $\zeta\in(a,b)$ with $a=0\ {\rm m},b=1\ {\rm m}$,
\[
\begin{array}{ccccccc}
R&=&463.59 \Omega {\rm m^{-1}},& &L&=&0.5062\ {\rm mH\ m^{-1}},\\
G&=&29.111\ \mu{\rm S\ m^{-1}},& &C&=&51.57\ {\rm nF\ m^{-1}},
\end{array}
\]
{which correspond to representative parameter data for $24$ gauge telephone polyethylene insulated cable (PIC) at $294\ \mathrm{K}$ with a frequency $f=1\ {\rm MHz}$}. Further, set $c_0=(LC)^{-1/2}$, $\omega=2\pi f$, $\varphi_0=1\ {\rm A^{-1}}$, $\varepsilon=0.1$ and amplitudes $A_1=-0.3\ {\rm A}, A_2=0.4\ {\rm A}$. The other angular frequencies are $\omega_1=\omega$, $\omega_2=16\omega$ and $\omega_3=\omega/2$. For the time interval we have defined $T_0=f^{-1}$ and $t\in[0,T]$, where $T=2T_0$. We have used semi-explicit finite differences with a tolerance of $10^{-3}$. The mesh in $\zeta$ has $M=1000$ points and the mesh in $t$ has
\[N=\left\lfloor\frac{b-a}{2c_0T}M\right\rfloor\]
points.
We further assume that the initial state is zero, i.e.,
$x_0=0$ and we apply the controller \eqref{eq:funnel_controllerk0} from Remark~\ref{rem:k0} with $k_0=1\ \Omega$.

\subsection{Heat equation}

Here we consider the following boundary controlled 2D heat equation on the unit disc given by
\begin{align*}
\tfrac{\partial x}{\partial t}(t,r,\theta)&=\alpha\left(\tfrac{\partial^{2}x}{\partial r^{2}}(t,r,\theta)+r^{-1}\tfrac{\partial x}{\partial r}(t,r,\theta)+r^{-2}\tfrac{\partial^{2}x}{\partial \theta^{2}}(t,r,\theta)\right),\\
u(t)&=\alpha\left(\tfrac{\partial x}{\partial r}(t,r,\theta)\right)|_{r=1},\qquad
y(t)=\int_0^{2\pi}{x(t,1,\theta)\mathrm{d}\theta},
\end{align*}
where $\alpha>0$ is the thermal diffusivity.

In this case, applying Theorem \ref{thm:funnel_heat}, we choose a funnel boundary of the form $\varphi(t)=\varphi_0\varepsilon^{-2}\tanh(\omega t+\varepsilon)$. The reference signal is given by $\yref(t)=A\sin(\omega t)$ and the simulated system is shown in Fig.~\ref{fig:3}. In Fig.~\ref{fig:4} we show the evolution of the plate at four different times.

\begin{figure}
	\centering
	\includegraphics[width=200pt]{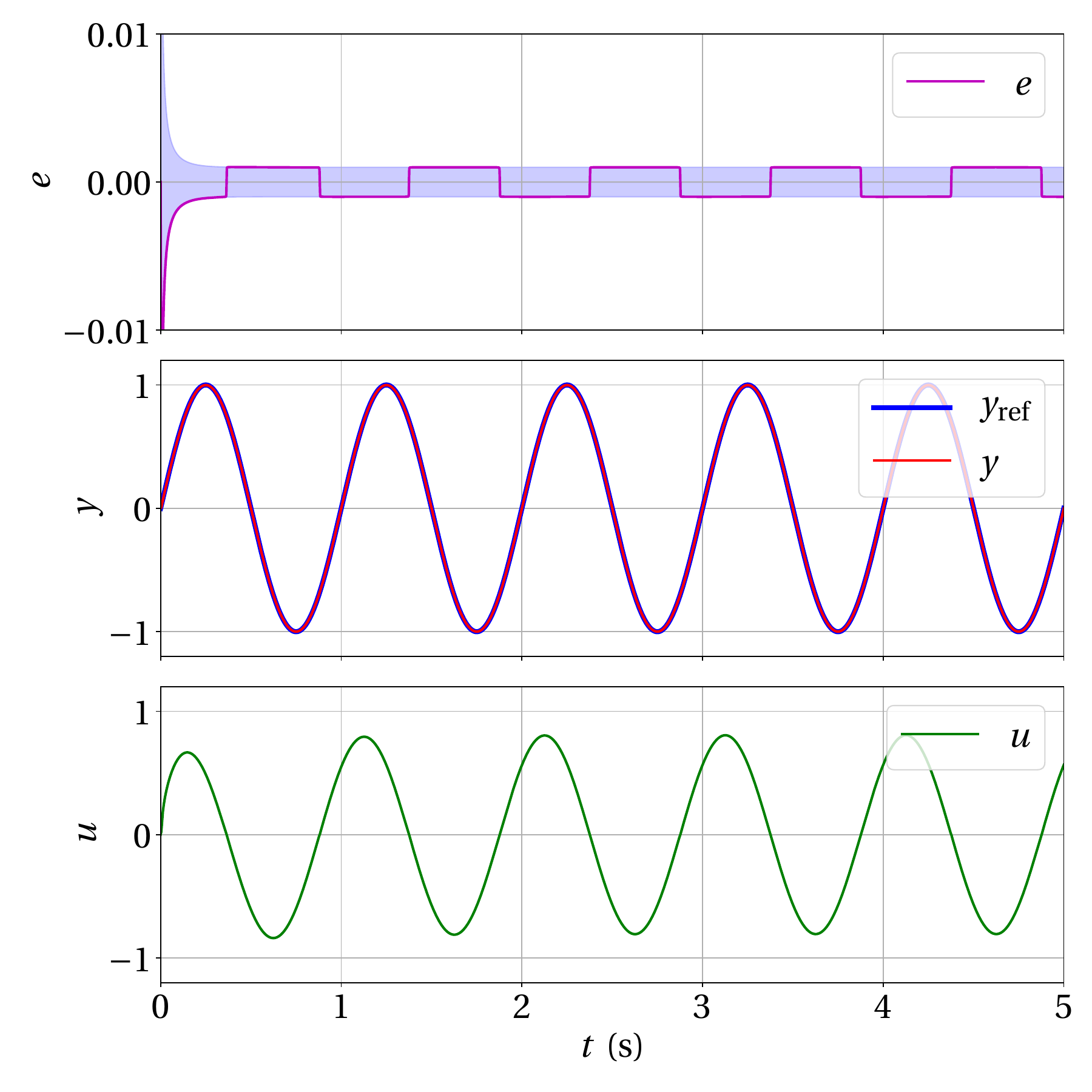}
	\caption{Performance funnel with the error, reference signal with the output of the closed-loop system and input of the closed-loop.}
	\label{fig:3}
\end{figure}

\begin{figure}
	\centering
	\includegraphics[width=200pt]{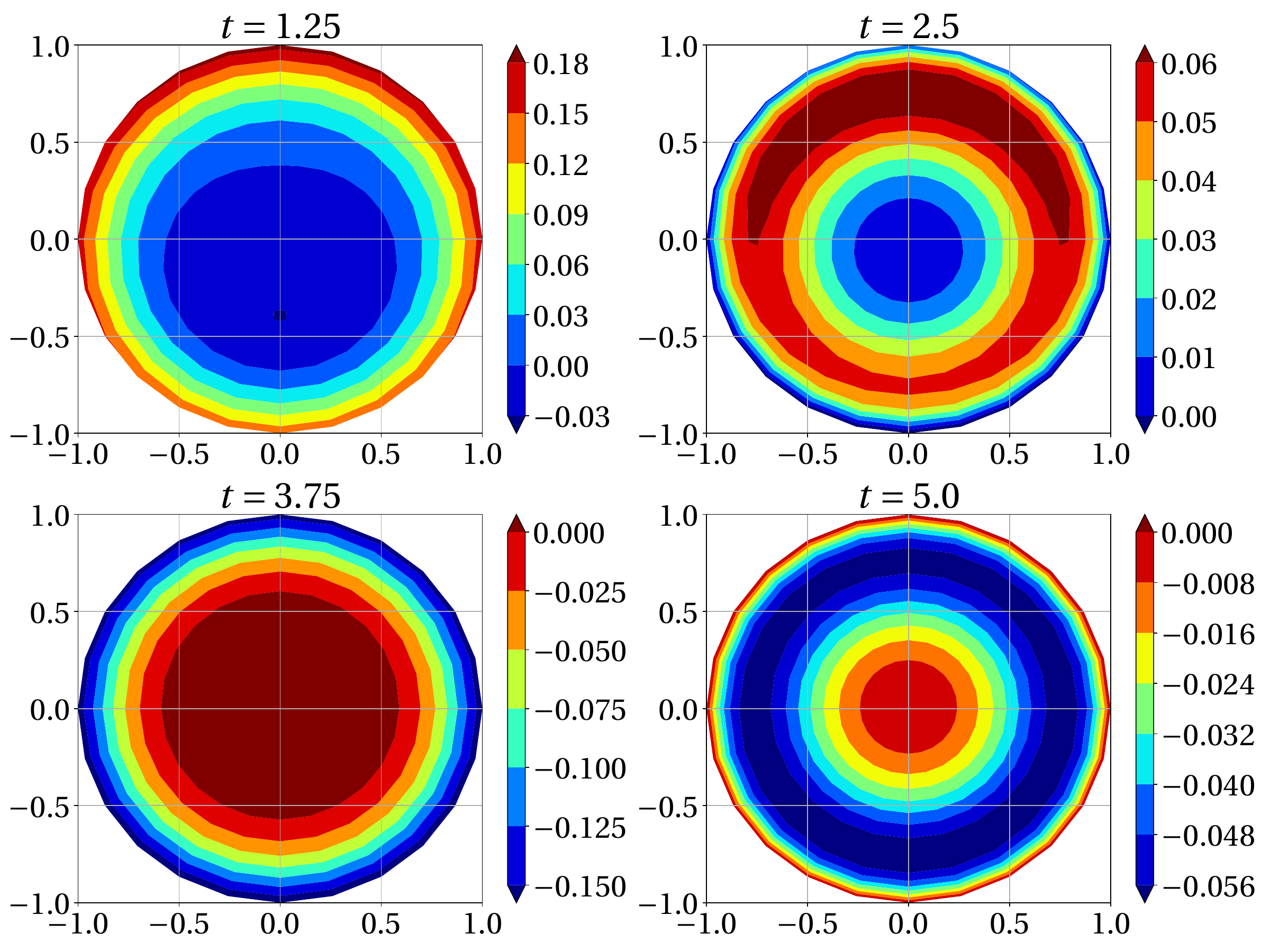}
	\caption{From left to right, top to bottom, the temperature of the plate for different increasing times.}
	\label{fig:4}
\end{figure}

The parameter values are $\alpha=1\ {\rm m^2s^{-1}}$, $r\in(r_0,r_1)$, with $r_0=0\ {\rm m}$ and $r_1=1\ {\rm m}$, and $\theta\in(0,2\pi)$. The amplitude values are $A=1\ {\rm J}$, $\varphi_0=0.1\ {\rm J}$ and $\varepsilon=10^{-1}$. We have set $T_0=1\ {\rm s}$, $\omega=2\pi T_0^{-1}$ and $T=5T_0$. We have used explicit finite differences with a partition in $r$ and $\theta$ of $N=25$ points for each variable and in $t\in[0,T]$ of
$$M=\left\lfloor10T\left(\frac{N^2}{(r_1-r_0)^2}+\frac{N}{r_1-r_0}+\frac{N^2}{4\pi^2}\right)\right\rfloor$$
points. The initial state of the system is
$$x(0,r,\theta)=x_0(r_1-r)^2\sin(\theta),$$
where
$x_0=0.5\ {\rm J m^{-2}}$. We apply the controller \eqref{eq:funnel_controllerk0} from Remark~\ref{rem:k0} with $k_0=1\ {\rm s^{-2}}$.

\end{document}